\newtheorem{thm}{Theorem}
\newtheorem{prop}[thm]{Proposition}
\newtheorem{cor}[thm]{Corollary}
\newtheorem{lem}[thm]{Lemma}
\newtheorem{exa}[thm]{Example}
\newsavebox{\qedB}
\sbox{\qedB}{\setlength{\unitlength}{1mm}
 \begin{picture}(4,4)(0,0)
  \thinlines
  {\put(0,0){\framebox(2.83,2.83){}}}%
  {\put(1.17,1.17){\framebox(2.83,2.83){}}}%
  {\put(0,0){\framebox(4,4){}}}%
  {\put(1.17,1.17){{\rule{1ex}{1ex} }}}%
 \end{picture}}
\newcommand{\QEDB}{\ifmmode\def\next{\tag"\usebox{\qedB}"}%
 \else\let\next=\relax
 {\unskip\nobreak\hfil\penalty50
 \hskip2em\hbox{}\nobreak\hfil\usebox{\qedB}
 \parfillskip=0pt \finalhyphendemerits=0\penalty-100\bigskip}\fi\next}
\newcommand{\N}{\mathbb N}
\newcommand{\bprop}{\begin{prop}}
\newcommand{\eprop}{\end{prop}}
\newcommand{\bcor}{\begin{cor}}
\newcommand{\ecor}{\end{cor}}
\newcommand{\blem}{\begin{lem}}
\newcommand{\elem}{\end{lem}}
\newcommand{\pset}{[\![p]\!]}
\newcommand{\mset}{[\![m]\!]}
\newcommand{\Mod}{\!\mod\!}
\title{Block-counting sequences are not purely morphic}
\author{Antoine Abram\inst{1} \and Yining Hu\inst{2} \and Shuo Li\inst{1}}
\institute{Laboratoire de Combinatoire et d'Informatique Mathématique,\\
Université du  Québec \`a Montréal,
 Montréal (QC), Canada,\\
\email{abram.antoine@lacim.ca, li.shuo@lacim.ca} 
\and
Institute for Advanced Study in Mathematics\\
Harbin Institute of Technology, Harbin, PR China\\
\email{huyining@protonmail.com}
}
\begin{document}

\maketitle


\begin{abstract}

Let $m$ be a positive integer larger than $1$, let $w$ be a finite word over $\left\{0,1,...,m-1\right\}$ and let $a_{m;w}(n)$ be the number of occurrences of the word $w$ in the $m$-expansion of $n$ mod $p$ for any non-negative integer $n$. In this article, we first give a fast algorithm to generate all sequences of the form $(a_{m;w}(n))_{n \in \mathbf{N}}$; then, under the hypothesis that $m$ is a prime, we prove that all these sequences are $m$-uniformly but not purely morphic, except for $w=1,2,...,m-1$; finally, under the same assumption of $m$ as before, we prove that the power series $\sum_{i=0}^{\infty} a_{m;w}(n)t^n$ is algebraic of degree $m$ over $\mathbb{F}_m(t)$. 
\end{abstract}

\section{Introduction, definitions and notation}

Given a positive integer $m$ larger than $1$ and a finite word $w$ over $\left\{0,1,2,...,m-1\right\}$, the block-counting sequence $(e_{m;w}(n))_{n \in \mathbf{N}}$ counts the number of occurrences of the word $w$ in the $m$-expansion of $n$ for each non-negative integer $n$. Let us define $(a_{m;w}(n))_{n \in \mathbf{N}}$ to be a sequence over $\left\{0,1,2,...,m-1\right\}$ such that $a_{m;w}(n) \equiv e_{m;w}(n) \mod (m)$ for all non-negative integer $n$. The analytical as well as the combinatorial properties of these sequences have been studied since 1900's after Thue and some well-known sequences are strongly related to this notion.  Recall that the $0,1$-Thue-Morse sequence can be defined as $(a_{2;1}(n))_{n \in \mathbf{N}}$ (see, for example, Page 15 in \cite{allouche_shallit_2003}) and the $0,1$-Rudin-Shapiro sequence can also be defined as $(a_{2;11}(n))_{n \in \mathbf{N}}$ (see, for example, Example 3.3.1 in \cite{allouche_shallit_2003}). In this article, we review some common properties of usual block-counting sequences and generalize them to all block-counting sequences.

To be able to announce our results, here we recall some definitions and notation. Let $A$ be a finite set. It will be called an {\em alphabet} and its elements will be called {\em letters}. Let $A^*$ denote the free monoid generated by $A$ under concatenations and let $A^{\mathbf{N}}$ denote the set of infinite concatenations of elements in $A$. Let $A^{\infty}=A^* \cup A^{\mathbf{N}}$. A {\em finite word} over the alphabet $A$ is an element in $A^*$ and an {\em infinite word} over $A$ is an element in $A^{\mathbf{N}}$. Particularly, the empty word is an element in $A^*$ and it is denoted by $\epsilon$. The length of a word $w$, denoted by $|w|$, is the number of letters that it contains. The length of the empty word is $0$ and the length of any infinite word is infinite. For any non-empty word $w \in A^{\infty}$, it can be denoted by $w[0]w[1]w[2]...$, where $w[i]$ are elements in $A$. A word $x$ is called a {\em factor} of $w$ if there exist two integers $0 \leq i \leq j \leq|w|-1$ such that $x=w[i]w[i+1]...w[j]$, this factor can also be denoted by $w[i..j]$. A factor $x$ is called a {\em prefix} (resp. a {\em suffix} ) of the word $w$ if there exists a positive integer $i$ such that $0 \leq i \leq |w|$ and $x=w[0..i]$ (resp. $x=w[i..|w|-1]$). For any finite word $w$ and any positive integer $n$, let $w^n$ denote the concatenation of $n$ copies of $w$, i.e. $w^n=ww...w$ $n$ times. Particularly, $w^0=\epsilon$. For any pair of words $w,v$ such that $v$ is a factor of $w$, let $|w|_v$ denote the number of occurrences of $v$ in $w$. 

Let $A$ and $B$ be two alphabets, a {\em morphism} $\phi$ from $A$ to $B$ is a map from $A^{\infty}$ to $B^{\infty}$ satisfying $\phi(xy)=\phi(x)\phi(y)$ for any pair of elements $x, y$ in $A^{\infty}$. The morphism $\phi$ is called {\em $k$-uniform} if for all elements $a \in A$, $|\phi(a)|=k$ and it is called {\em non-uniform} otherwise. A morphism $\phi$ is called a {\em coding} function if it is $1$-uniform and it is called {\em non-erasing} if $\phi(a) \neq \epsilon$ for all $a \in A$.

Let $A$ be a finite alphabet and let $(a_n)_{n \in \mathbf{N}}$ be an infinite sequence over $A$, it is called {\em morphic} if there exists an alphabet $B$, an infinite sequence $(b_n)_{n \in \mathbf{N}}$ over $B$, a non-erasing morphism $\phi$ from $B^{\infty}$ to $B^{\infty}$ and a coding function $\psi$ from $B^{\infty}$ to $A^{\infty}$, such that $(b_n)_{n \in \mathbf{N}}$ is a fixed point of $\phi$ and $(a_n)_{n \in \mathbf{N}}=\psi((b_n)_{n \in \mathbf{N}})$. Moreover, the sequence $(a_n)_{n \in \mathbf{N}}$ is called {\em uniformly morphic} if $\phi$ is $k$-uniform for some integer $k$, and it is called {\em non-uniformly morphic} otherwise. The sequence $(a_n)_{n \in \mathbf{N}}$ is called {\em purely morphic} if $A=B$ and $\psi=Id$.

For any positive integer $m$, let $[\![m]\!]=\left\{0,1,2,...,m-1\right\}$. For any $t \in [\![m]\!]$ let $t^+ \equiv t+1 \mod m$; for any $w \in [\![m]\!]^*$, let $w^+=w[0]^+w[1]^+...w[|w|-1]^+$.

In Section\ref{sec:windows}, we give a fast algorithm to generate all block-counting sequences. It is well-known that the Thue-Morse sequence can be generated by the following algorithm (see, for example, \cite[A008277]{oeis}):

\begin{exa}
Let $(w_n)_{n \in \mathbf{N}}$ be a sequence of words over the $[\![2]\!]^*$ such that $w_0=0$ and that $w_{i+1}=w_iw_i^+$ for all $i$, then the Thue-Morse sequence $(a_{2;1}(n))_{n \in \mathbf{N}}$ satisfies $(a_{2;1}(n))_{n \in \mathbf{N}}= \lim_{i \to \infty}w_i$.
\end{exa}
In Section\ref{sec:windows}, we prove that the Rudin-Shapiro sequence can also be generalized by a similar algorithm, see \ref{exa_3}. More generally, we find fast algorithms to generate all block-counting sequences. These algorithms are given by \ref{prop:blockx} and \ref{prop:block0} in Section\ref{sec:windows}.

From the definitions recalled as above, any morphic word can be classified as either a uniformly morphic word or a non-uniformly morphic word. However, from a recent article \cite{Allouche2020}, Allouche and Shallit proved that all uniformly morphic sequences are also non-uniformly morphic. This result implies that all sequences in the family of morphic sequences are also in its subfamily of non-uniformly morphic sequences. Indeed, many works can be found in the literature in the direction of characterizing all those non-uniformly morphic sequences which are {\em not} uniformly morphic, for example, one can find \cite{ITA1989}\cite{shallit1996}\cite{BARTHOLDI2003}\cite{AIF2006}\cite{BARTHOLDI2010}\cite{Benli2012}\cite{ALLOUCHE202011}. However, in \cite{Allouche2020}, it is proved actually that all uniformly morphic sequences are also non-uniformly {\em non-purely} morphic. In other words, from the construction of the proof given in \cite{Allouche2020}, a nontrivial coding function is required. In Section \ref{sec:morphic}, we investigate all those uniformly morphic sequences which are not purely morphic. 
It is already known that the Rudin-Shapiro sequence is in this case (Example 26 in \cite{morphic}). In section \ref{sec:morphic}, we prove that all other sequences in the form of $(a_{m,w}(n))_{n \in \mathbf{N}}$ have the same property when $|w| \neq 1$ and $m$ is a prime.  The result is announced as follows:

\begin{thm}
\label{th1}
Let $p$ be a prime number and $w \in \pset^*$. The sequence $(a_{p;w}(n))_{n \in \mathbf{N}}$ is a $p$-uniformly morphic. Moreover, if $|w|=1$ and $w\neq 0$, this sequence is purely morphic and if not is it non-purely morphic.
\end{thm}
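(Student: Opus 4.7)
Using the algorithmic descriptions of $(a_{p;w}(n))$ supplied by Propositions~\ref{prop:blockx} and~\ref{prop:block0}, I will extract an explicit $p$-uniform substitution on an extended alphabet. Take $A = [\![p]\!] \times [\![p]\!]^{|w|-1}$, in which a letter $(c,u)$ records the current count modulo $p$ together with the last $|w|-1$ digits of the base-$p$ prefix read so far. Define $\Phi \colon A \to A^p$ so that its $i$-th coordinate on $(c,u)$ is $\bigl(c + \chi_w(u i) \Mod p,\ \tau(u i)\bigr)$, where $\chi_w(u i) = 1$ if $u i$ ends with $w$ and $0$ otherwise, and $\tau(u i)$ denotes the last $|w|-1$ letters of $u i$. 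Letting $\psi \colon A \to [\![p]\!]$ project onto the first coordinate, the sequence $(a_{p;w}(n))_{n \geq 0}$ is the image under $\psi$ of the fixed point of $\Phi$ starting at $(0, 0^{|w|-1})$, witnessing $p$-uniform morphicity.

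\textbf{Pure morphicity when $|w|=1$, $w=c\neq 0$.} I propose $\phi_c \colon [\![p]\!] \to [\![p]\!]^p$, where $\phi_c(d)$ is the length-$p$ word whose $j$-th letter ($j \in [\![p]\!]$) equals $d+1 \Mod p$ when $j=c$ and $d$ otherwise. Since $c \neq 0$, $\phi_c(0)$ begins with $0$, so $\phi_c$ is prolongable on $0$. A direct induction on the length of the base-$p$ expansion of $n$, using $a_{p;c}(pn + i) \equiv a_{p;c}(n) + [i = c] \pmod p$ with $a_{p;c}(0) = 0$, shows $\lim_{n \to \infty} \phi_c^n(0) = (a_{p;c}(n))_n$.

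\textbf{Non-pure morphicity otherwise.} Assume, for contradiction, that $s := (a_{p;w}(n))_n$ is fixed by a non-erasing morphism $\phi \colon [\![p]\!]^* \to [\![p]\!]^*$. Since $s[0] = 0$, prolongability forces $k := |\phi(0)| \geq 2$. A case analysis on whether $w$ starts with $0$ shows that, under the hypothesis ($|w| \geq 2$ or $w = 0$), the smallest integer whose base-$p$ expansion contains $w$ as a factor is at least $p$; hence $s[0] = s[1] = \cdots = s[p-1] = 0$. Consequently the first $p$ blocks of the $\phi$-factorization of $s$ are all $\phi(0)$, forcing
\[
  s[0 \ldots pk - 1] = \phi(0)^p,\qquad \phi(0) = s[0 \ldots k-1].
\]
For each candidate $k \geq 2$ passing this initial $p$-power test, $\phi$ on the remaining letters $j \neq 0$ is then read off from the block structure at the first appearance of $j$ in $s$, and a contradiction is obtained at the first position where the reconstructed $\phi$-image disagrees with $s$, using the explicit block-counting recurrence of Section~\ref{sec:windows}.

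\textbf{Main obstacle.} The subtle step is the last one. For $|w| \geq 2$ with $w[0] \neq 0$, one shows directly that $s$ admits no initial $p$-power of period $\geq 2$, and the contradiction is immediate. However, for some $w$ (notably $w = 0$, where $(a_{2;0}(n))_n$ starts $0,0,1,0,0,1,\ldots$ with an initial square of period $3$), $s$ does contain short initial $p$-powers, so the contradiction must be extracted after the morphism has been reconstructed on a few further letters. The technical heart will be to carry this out uniformly in $k$, using the explicit generation of $(a_{p;w}(n))_n$ from Section~\ref{sec:windows} to exhibit, for each $k$, a specific position at which the forced morphism fails; this is essentially a reflection of the fact that the full state of the block-counting automaton requires strictly more than $p$ letters whenever $|w|\geq 2$ or $w=0$.
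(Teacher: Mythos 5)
Your overall strategy (force initial $p$-power prefixes from pure morphicity, then rule them out using the generation algorithms of Section~\ref{sec:windows}) is the same one the paper uses, but the part of the argument that actually carries the weight of the theorem is missing. You assert that for $|w|\geq 2$ with $w[0]\neq 0$ "one shows directly that $s$ admits no initial $p$-power of period $\geq 2$, and the contradiction is immediate." That statement is false: since $a_{p;w}(n)=0$ for all $n<(w)_p$, the sequence begins with $0^{(w)_p}$, so \emph{every} $v=0^{\ell}$ with $p\ell\leq (w)_p$ gives an initial $p$-power (e.g.\ $w=100$ in base $2$ yields the prefix $0000=(00)^2$). What is actually true, and what the paper proves, is much more delicate: any $(p+1)$-power prefix $v^{p+1}$ with $|v|\geq 2p^{|w|}$ must have $|v|$ divisible by $p^{|w|-1}$ (Proposition~\ref{prop:multiple}, via the position of the type-2 $p$-blocks of Proposition~\ref{prop:block}), and no such prefix with $|v|=ip^{|w|-1}$, $i\geq p+1$, exists (Proposition~\ref{prop:power}, via Lemmas~\ref{lem:words-1} and~\ref{lem:words-2} about suffixes of $[kip^{|w|-1}]_p$ that are prefixes of $w$). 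Only then does the pigeonhole "purely morphic $\Rightarrow$ infinitely many $(p+1)$-power prefixes" close the argument. Your "main obstacle" paragraph explicitly defers exactly this work ("the technical heart will be to carry this out uniformly in $k$"), so the non-pure-morphicity claim --- the substance of the theorem --- is not proved. Moreover, the exceptional words need genuinely separate treatments that your sketch does not supply: for $w=0$ the sequence begins with $10^{p-1}$ (under the paper's convention $[0]_p=0$), so your premise $s[0]=\cdots=s[p-1]=0$ fails outright, and for $w=10$ the sequence is $0^p1\cdots$, so one only gets $p$-th powers rather than $(p+1)$-st powers; the paper devotes Propositions~\ref{prop:0-2}, \ref{prop:0-k}, \ref{prop:10-2} and~\ref{prop:10-p} to bespoke case analyses for these.

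Two smaller problems. First, your explicit $p$-uniform substitution on $A=\pset\times\pset^{|w|-1}$ is not correct for $0$-words: initializing the memory component at $0^{|w|-1}$ amounts to prepending phantom zeros to $[n]_p$, which creates spurious occurrences (for $w=01$ your automaton outputs $1$ at $n=1$, whereas $a_{p;01}(1)=0$; for $w=0^{|w|}$ the map is not even prolongable on $(0,0^{|w|-1})$). This is fixable by adding a flag for "no nonzero digit read yet," but as written the construction fails precisely on the words that cause trouble elsewhere; the paper sidesteps this by citing \cite{cateland} and \cite{allouche_shallit_2003}. Second, your purported first terms of $(a_{2;0}(n))_n$ ($0,0,1,\dots$) disagree with the paper's ($1,0,1,0,\dots$) because you are implicitly taking $[0]_2=\epsilon$; whichever convention you adopt, it must be stated and used consistently, since the base cases of all the square-prefix arguments depend on it. The part of your proposal that is complete and correct --- the explicit morphism $\phi_c$ for $|w|=1$, $w=c\neq 0$ --- coincides with the paper's Proposition~\ref{prop:k}.
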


In Section \ref{sec:alg}, under the assumption that $p$ is a prime number, we prove that the formal power series $f_{p;w}=\sum_{i=0}^{\infty} a_{m;w}(n)t^n$ is algebraic and of degree $p$ over $\mathbb{F}_p(t)$. Indeed, from Christol's theorem \cite{Christol1980KMFR}, we know that the power series $f_{p;w}$ is algebraic over $\mathbb{F}_p(t)$. 
In Section \ref{sec:alg}, we prove that $f$ is algebraic of degree $p$.
\section{Windows functions and $(a_{p;w}(n))_{n \in \mathbf{N}}$}
\label{sec:windows}

For any positive integer $m$ and non-negative integer $n$, let $[n]_m$ denote the expansion of $n$ in the base $m$. For a given word $w \in \mset^*=\{0,1,\cdots, m-1\}^*$, $w=w[0]w[1]...w[|w|-1]$, let $(w)_m=\sum_{i=0}^{|w|-1}w[i]m^{|w|-1-i}$ and let $w'=w[1]w[2]\cdots w[|w|-1]$. A word $w$ is called a $x$-word if $w[0]=x$. For a given string $w$, let $\alpha_w=\frac{(w')_m}{m^{|w|-1}}$, $\beta_w=\frac{(w')_m+1}{m^{|w|-1}}$ and let $\phi_w: \mset^* \to \mset^*$ be a function such that for any $v \in  \mset^*$, $\phi_w(v)$ satisfies the following propriety:
$$
\phi_w(v)[i]=
\begin{cases}
v[i]+1 \Mod m \;\; \text{if }\alpha_w|v| \leq i <\beta_w|v|\\
v[i]\;\;\;\text{otherwise}.
\end{cases}
$$

\subsection{Block-counting sequences for non-$0$-words}

\begin{prop}
\label{prop:blockx}
Let $m$ a positive number, let $x\in\mset\backslash\{0\}$, let $w \in \mset^*$ be a $x$-word and let $t=(v)_m$. If we let $(u_i)_{i \in \N}$ be a sequence of words such that $|u_0|=m^{|w|}$, that
$$
u_0[i]=
\begin{cases}
1 \;\; \text{if i=t}\\
0\;\;\;\text{otherwise},
\end{cases}
$$
and that $u_{k+1}=u_k^x\phi_w(u_k)u_k^{m-x-1}$, then $\lim_{k \to \infty}u_k=(a_{m;w}(n))_{n \in \N}$.
\end{prop}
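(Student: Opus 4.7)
The plan is to argue by induction on $k$ that, viewing $u_k$ as an array indexed by $\{0,1,\ldots,m^{|w|+k}-1\}$, one has $u_k[n]=a_{m;w}(n)$. The base case is immediate: for $0\leq n<m^{|w|}$, the base-$m$ expansion of $n$ has at most $|w|$ digits, and because $w$ begins with $x\neq 0$, such an expansion contains $w$ as a factor if and only if $n=(w)_m$. Hence $u_0$ as defined matches $(a_{m;w}(n))_{0\leq n<m^{|w|}}$ (reading $t=(w)_m$, the statement's ``$(v)_m$'' appearing to be a typo).

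For the inductive step, write $n<m^{|w|+k+1}$ as $n=d\,m^{|w|+k}+r$ with $d\in\mset$ and $0\leq r<m^{|w|+k}$. If $d=0$ then $n=r$ and trivially $a_{m;w}(n)=a_{m;w}(r)$; if $d\geq 1$, the base-$m$ expansion of $n$ is the single digit $d$ followed by the length-$(|w|+k)$ left-zero-padded expansion of $r$. Using crucially that $w[0]=x\neq 0$, any occurrence of $w$ lying entirely inside the padded block must already occur in the raw expansion of $r$, since leading zeros cannot create new occurrences of a word that does not start with $0$. Moreover, at most one occurrence of $w$ can straddle the boundary between the new leading digit and the padded block, namely the one starting at position $0$; it contributes exactly when $d=x$ and the first $|w|-1$ digits of the padded expansion of $r$ spell $w'=w[1]w[2]\cdots w[|w|-1]$.

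It remains to translate this boundary condition into the language of $\phi_w$. The condition that the padded expansion of $r$ starts with $w'$ is equivalent to
\[
(w')_m\cdot m^{k+1}\;\leq\; r\;<\;\bigl((w')_m+1\bigr)\cdot m^{k+1},
\]
and dividing through by $|u_k|=m^{|w|+k}$ shows that this is precisely the index interval $[\alpha_w|u_k|,\beta_w|u_k|)$ on which $\phi_w$ increments the entry by $1\bmod m$. Consequently, the block of $u_{k+1}$ corresponding to leading digit $d$ is a verbatim copy of $u_k$ for every $d\in\mset\setminus\{x\}$ and equals $\phi_w(u_k)$ when $d=x$. Concatenating these blocks in the order $d=0,1,\ldots,m-1$ yields exactly $u_k^{x}\,\phi_w(u_k)\,u_k^{m-x-1}$, closing the induction; passing to the limit gives the claim.

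The main obstacle is conceptual rather than computational: one must carefully reconcile the two competing notions of ``expansion'' of $r$---the raw base-$m$ expansion (without leading zeros) defining $a_{m;w}(r)$, versus the zero-padded length-$(|w|+k)$ expansion that appears after prepending a leading digit---and this reconciliation rests essentially on the hypothesis $x\neq 0$. This is also the reason the statement cannot cover $0$-words and why a separate treatment (given by \textsc{Proposition}~\ref{prop:block0}) is needed there. Once the padding issue is under control, the rest is a bookkeeping of which indices receive the $+1$ increment, which is encoded precisely by the interval $[\alpha_w|u_k|,\beta_w|u_k|)$ baked into the definition of $\phi_w$.
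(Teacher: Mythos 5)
Your proof is correct and follows essentially the same route as the paper's: decompose $n$ by its leading base-$m$ digit, note that prepending a digit $y\neq 0$ changes the $w$-count by at most one, with equality exactly when $y=x$ and $r$ lies in the window $[\alpha_w m^{|w|+k},\beta_w m^{|w|+k})$, and deduce the block recursion $u_{k+1}=u_k^x\phi_w(u_k)u_k^{m-x-1}$. You also rightly read $t=(v)_m$ as a typo for $(w)_m$, and your handling of the zero-padding and of the ``if and only if'' direction is, if anything, more careful than the paper's.
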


\begin{proof}
First, it is obvious that $u_0$ is a prefix of $(a_{m;w}(n))_{n \in \mathbf{N}}$.
Now let $y\in \mset\backslash\{0\}$.
For any integers $r$ and $m^k$ such that $0 \leq r < m^k$, $0 \leq e_{m;w}(r+ym^k)-e_{m;w}(r) \leq 1$.
Indeed, since $y\neq 0$, $[r+ym^k]_m=y0..0[r]_m$, thus, $[r+ym^k]_m$ has exactly one more $x$-factor of length $|v|$ than $[r]_p$ only if $y=x$, and this factor can be $w$ or not.
Moreover, $e_{m;w}(r+ym^k)-e_{m;w}(r)=1$ only if $w$ is a prefix of $[r+ym^k]_m$.
Consequently, $e_{m;w}(r+ym^k)=e_{m;w}(r)+1 $ only if $\alpha_wm^k \leq r <\beta_wm^k$ and $y=x$.
Hence, for any $t \in \mset\backslash\{x\}$,
\begin{align*}
(a_{m;w}(n))_{tm^k \leq n <(t+1) m^k}&=(a_{m;w}(n))_{0 \leq n <m^{k}}\\
(a_{m;w}(n))_{xm^k \leq n <(x+1) m^k}&=\phi_w((a_{m;w}(n))_{0 \leq n <m^{k}}).
\end{align*}

This implies that
 $$(a_{m;w}(n))_{0 \leq n <m^{k+1}}=u_k^{x}\phi_w(u_k)u_k^{m-x-1},$$ which concludes the proof.\qed
\end{proof}

\begin{exa}
\label{exa_3}
Let us compute the Rudin-Shapiro sequence using windows function. From Example 3.3.1 in \cite{allouche_shallit_2003}, the Rudin-Shapiro sequence can be defined as $(a_{2;11}(n))_{n \in \mathbf{N}}$. From Proposition \ref{prop:blockx}, set $\alpha_{11}=\frac{1}{2}$, $\beta_{11}=\frac{2}{2}$ and $s_0=0,0,0,1$. For any words $w \in \left\{0,1\right\}^*$ such that $w=w_1w_2$ with $|w_1|=|w_2|$, $\phi_s(w)=w_1(w_2^+)$. Thus, one can compute
$$s_1=0,0,0,1,0,0,1,0; \;\;\;\;\;\;\;\;\;\;\; s_2=0,0,0,1,0,0,1,0,0,0,0,1,1,1,0,1;$$
$$s_3=0,0,0,1,0,0,1,0,0,0,0,1,1,1,0,1,0,0,0,1,0,0,1,0,1,1,1,0,0,0,1,0;$$
$(e_s(n))_{n \in \mathbf{N}}$ is the limit of $s_n$ when $n$ tends to infinite.\qed
\end{exa}

\subsection{Block-counting sequences for $0$-words}

\begin{prop}
\label{prop:block0}
Let $m$ be a positive number, let $w\in\mset^*$ a $0$-word and let $t=(w)_m$. Let $u_0$ be such that $|u_0|=m^{|w|}$ and
$$
u_0[i]=
\begin{cases}
1 \;\; \text{if i=t}\\
0\;\;\;\text{otherwise},
\end{cases}
$$
and let $u_{k+1}=\phi_w(u_k)u_k^{m-1}$.

By letting $w_{-1}=u_0$ if $w=0^{|w|}$ and $w_{-1}=0^{m^{|w|}}$ if not, $w_k=u_k^{m-1}$ for $k\geq 0$, then $$(a_{m;w}(n))_{n \in \N}=w_{-1}w_0w_1w_2\cdots w_n\cdots.$$
\end{prop}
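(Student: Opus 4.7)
My plan is to prove, by induction on $k \geq 0$, the following auxiliary claim $P_k$: for every leading digit $y \in \{1, \ldots, m-1\}$ and every $r \in [0, m^{|w|+k})$ one has
$$a_{m;w}(y m^{|w|+k} + r) = u_k[r].$$
The key point is that the right-hand side does not depend on $y$, which is exactly what forces the factor $u_k^{m-1}$ in $w_k$: since $w[0] = 0$, no occurrence of $w$ in the $m$-expansion of $y m^{|w|+k} + r$ can involve the leading nonzero digit $y$, so the count depends only on the suffix of the expansion. Combining all the $P_k$ with a separate verification on the initial block $[0, m^{|w|})$ yields the claimed decomposition $w_{-1} w_0 w_1 \cdots$.

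For the initial block, if $n \in [1, m^{|w|})$ then $[n]_m$ has length at most $|w|$ and starts with a nonzero digit, so the $0$-word $w$ cannot be a factor; hence $a_{m;w}(n) = 0$. At $n = 0$ the only way to obtain a nonzero count is $w = 0^{|w|}$, which explains the split in the definition of $w_{-1}$: when $w = 0^{|w|}$ we have $t = 0$ and $u_0$ carries its single $1$ at position $0$, matching $w_{-1} = u_0$; otherwise the whole initial block is zero, giving $w_{-1} = 0^{m^{|w|}}$.

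For the base case $P_0$, the expansion $[y m^{|w|} + r]_m$ is the length-$(|w|+1)$ word $y$ followed by the $|w|$-digit expansion of $r$ padded with leading zeros. Occurrences of $w$ starting at position $0$ are blocked by $y \neq 0 = w[0]$, while position $1$ matches iff $r = (w)_m = t$, giving $u_0[r]$. For the inductive step $P_k \Rightarrow P_{k+1}$, I split $r = s m^{|w|+k} + r'$ with $s \in [0, m)$ and $r' \in [0, m^{|w|+k})$. When $s \neq 0$, the expansion of $n = y m^{|w|+k+1} + r$ is $y$ followed by $[s m^{|w|+k} + r']_m$; the leading $y$ again contributes no occurrence, and $P_k$ applied with leading digit $s$ gives $a_{m;w}(n) = u_k[r']$, matching the $u_k^{m-1}$ part of $u_{k+1}$. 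When $s = 0$, the expansion becomes $y, 0$, then the $(|w|+k)$-digit padded expansion of $r'$, and it differs from the expansion of $y m^{|w|+k} + r'$ (handled by $P_k$) by one inserted leading zero in the suffix. This inserted zero creates exactly one new candidate occurrence, starting at position $1$, which realizes iff the first $|w|-1$ digits of the padded expansion of $r'$ equal $w'$, equivalently $(w')_m m^{k+1} \leq r' < ((w')_m+1) m^{k+1}$. I recognize this interval as the window $[\alpha_w |u_k|, \beta_w |u_k|)$ on which $\phi_w$ increments, so $a_{m;w}(n) \equiv \phi_w(u_k)[r'] = u_{k+1}[r] \pmod m$, closing the induction.

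The main obstacle is the $s = 0$ subcase: I must compare occurrences across expansions of different lengths, pinpoint the unique new candidate produced by the inserted zero, and verify that its matching condition reproduces precisely the window defining $\phi_w$. The special case $w = 0^{|w|}$ also deserves independent care, both to justify $a_{m;w}(0) = 1$ in the initial block and because the $\phi_w$-increment then fires at position $0$ at every step, forcing $u_k[0] \equiv k+1 \pmod m$ and mirroring the cascading occurrences of $0^{|w|}$ inside the all-zero tail of the expansion of $y m^{|w|+k+1}$.
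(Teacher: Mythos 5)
Your proof is correct and follows essentially the same route as the paper's: both arguments rest on the two observations that prepending a nonzero digit to the expansion cannot create an occurrence of a $0$-word (whence the $u_k^{m-1}$ repetition), and that the single new zero appearing when the expansion lengthens creates exactly one candidate occurrence of $w$, realized precisely on the window $[\alpha_w|u_k|,\beta_w|u_k|)$ defining $\phi_w$. The only cosmetic difference is that you locate that new zero by inserting a digit after the leading $y$, whereas the paper's Lemma~\ref{lem:0block} reaches the same comparison through the carry $r\mapsto r+m^k$ applied to $(m-1)$-words.
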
 

\begin{lem}
\label{lem:0block}
Let $m$ be a positive number, $y\in\mset\backslash\{0\}$, $w\in\mset^*$ a $0$-word
and let $t=(w)_m$, then for any integer $r$ satisfying $t <m^k\leq r <m^{k+1}$:\\
1) $e_{m;w}(r+ym^{k+1})=e_{m;w}(r)$;\\
2) $0 \leq e_{m;w}(r+m^{k})-e_{m;w}(r) \leq 1$;\\
3) $e_{m;w}(r+m^{k})-e_{m;w}(r)=1$ only if $[r]_m$ is a $m-1$-word and $\alpha_wm^k \leq r <\beta_wm^k$.\\
\end{lem}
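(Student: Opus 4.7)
The plan is to prove the three statements by a direct analysis of how the base-$m$ expansion changes under the two operations $r \mapsto r + y m^{k+1}$ and $r \mapsto r + m^k$, exploiting crucially that $w$ begins with $0$. Throughout I would write $[r]_m = d_0 d_1 \cdots d_k$ with leading digit $d_0 \geq 1$, which is valid since $m^k \leq r < m^{k+1}$.

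For (1), since $y \neq 0$ the expansion $[r + y m^{k+1}]_m$ is exactly $y$ prepended to $[r]_m$. Any length-$|w|$ factor starting at position $0$ of this new word begins with $y \neq 0 = w[0]$ and cannot equal $w$, while every factor starting at position $i \geq 1$ lies entirely inside $[r]_m$ and corresponds bijectively to the factor of $[r]_m$ at position $i-1$. Hence the number of occurrences of $w$ is unchanged.

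For (2) and (3), I would split on whether adding $m^k$ produces a carry. If $d_0 \leq m-2$, then $[r + m^k]_m = (d_0+1) d_1 \cdots d_k$ differs from $[r]_m$ only in its leading digit, and the only affected length-$|w|$ factor is the one at position $0$; its first letter is $d_0 \geq 1$ before and $d_0+1 \geq 2$ after, so in neither case does it equal the $0$-word $w$, and the count is preserved. If $d_0 = m-1$, i.e.\ $[r]_m$ is an $(m-1)$-word, then $[r + m^k]_m = 1\,0\,d_1\cdots d_k$, of length $k+2$. Factors at positions $\geq 2$ of the new word lie in the common suffix $d_1\cdots d_k$ and biject with factors of $[r]_m$ at positions $\geq 1$; the factors at position $0$ of the two words start with $m-1$ and $1$ respectively, neither equal to $0$; and the only new candidate is the factor at position $1$ of $[r+m^k]_m$, which is $0\,d_1\cdots d_{|w|-1}$ and equals $w = 0 w'$ precisely when $w'$ is a prefix of $d_1 \cdots d_k$. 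This can happen for at most one factor, which gives the bound $0 \leq e_{m;w}(r+m^k) - e_{m;w}(r) \leq 1$.

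To finish (3), the prefix condition $w' = d_1 \cdots d_{|w|-1}$ unfolds numerically into $(w')_m \, m^{k-|w|+1} \leq r \bmod m^k < ((w')_m + 1)\, m^{k-|w|+1}$, which after dividing by $m^{|w|-1}$ is the inequality on $\alpha_w$ and $\beta_w$ in the stated form. The hypothesis $t < m^k$ ensures $k \geq |w|-1$, so that a length-$|w|$ factor at position $1$ actually fits in the expansion. I expect the main point requiring care to be the bookkeeping of which factors straddle the boundary between the old $[r]_m$ and the newly prepended high-order digits; the carry case-split above is the cleanest way to handle this, after which the remaining arithmetic is routine.
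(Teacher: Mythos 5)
Your proof is correct and follows essentially the same route as the paper's: part (1) by noting that prepending the nonzero digit $y$ creates only the position-$0$ factor, which cannot begin with $0=w[0]$, and parts (2)--(3) by splitting on whether adding $m^k$ carries, with the unique candidate new occurrence sitting at position $1$ of $1\,0\,d_1\cdots d_k$. One small quibble: $t<m^k$ does not in general force $k\geq |w|-1$ (e.g.\ $w=0\cdots 01$ has $t=1$), but when the length-$|w|$ factor does not fit there is simply no increment, so nothing in your argument breaks.
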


\begin{proof}
For any integer $r$ satisfying $t <m^k\leq r <m^{k+1}$, we first remark that $[r+ym^{k+1}]_m=y[r]_m$.
Since $[r+ym^{k+1}]_m$ and $y[r]_m$ have the same set of $0$-factors, $e_{m;w}(r+ym^{k+1})=e_{m;w}(r)$.
Second, if $[r]_m$ is not a $m-1$-word than $[r]_m$ and $[r+m^k]_m$ has the same set of $0$-factors. But if $[r]_m$ is a $m-1$-word, then $[r+m^k]_m=10[r]_m'$ and thus, can have at most one more $0$ factors of length $|w|$ than $[r]_m$.
Consequently, $0 \leq e_{m;w}(r+m^{k})-e_{m;w}(r) \leq 1$.
Moreover, in the latter case, $e_{m;w}(r+m^k)-e_{m;w}(r) =1$ only if $1w$ is a prefix of $[r+m^k]_m$.
So $e_{m;w}(r+m^k)=e_{m;w}(r)+1 $ only if $\alpha_wm^k <r \leq\beta_wm^k$.\qed
\end{proof}

\begin{proof}[of Proposition \ref{prop:block0}]
We first remark that $w_{-1}w_0$ is a prefix of $(a_{m;w}(n))_{n \in \mathbf{N}}$.

Further, for any integer $k$ satisfying $(w)_m <m^k$ and $x\in\mset\backslash\{0\}$, from Lemma \ref{lem:0block},
\begin{align*}
(a_{m;w}(n))_{xm^k \leq n <(x+1)m^k} &= (a_{m;w}(n))_{m^k \leq n <2m^k}\\
(a_{m;w}(n))_{m^{k+1} \leq n <m^{k+1}+m^k} &= (\phi(a_{m;w}(n))_{(p-1)m^k \leq n <m^{k+1}})).
\end{align*}
This implies that  
$$(a_{m;w}(n))_{m^k \leq n < m^{k+1}}=\left(\phi_w((a_{m;w}(n))_{m^{k-1} \leq n < m^{k}})(a_{m;w}(n))_{m^{k-1} \leq n < m^{k}}^{m-1}\right)^{m-1},$$
which concludes the proof.\qed
\end{proof}

\begin{exa}
Let us compute the sequence $(a_{2;01}(n))_{n \in \mathbf{N}}$ with. From the previous theorem, set $\alpha_{01}=\frac{1}{2}$, $\beta_{01}=\frac{2}{2}$, $s_{-1}=0,0,0,0$ and $s_0=0,1,0,0$. For any words $w \in \left\{0,1\right\}^*$ such that $w=w_1w_2$ with $|w_1|=|w_2|=k$ for some integer $k$, $\phi_s(w)=w_1w_2^+$. Thus, one can compute
$$s_1=0,1,1,1,0,1,0,0; \;\; s_2=0,1,1,1,1,0,1,1,0,1,1,1,0,1,0,0;$$
$$
s_3=0,1,1,1,1,0,1,1,1,0,0,0,1,0,1,1, 0,1,1,1,1,0,1,1,0,1,1,1,0,1,0,0;
$$
$(a_{2;01}(n))_{n \in \mathbf{N}}$ is the limit of $s_{-1}s_0s_1s_2s_3...s_n$ when $n$ tends to infinite.\qed
\end{exa}

\section{$(a_{p;w}(n))_{n \in \mathbf{N}}$ are not purely morphic}
\label{sec:morphic}

From now on, we work with $p$ a prime number.

We first prove that $(a_{p;w}(n))_{n \in \mathbf{N}}$ is not purely morphic when $|w|>1$.  We will need a simple notation, for $w=w[0]\cdots w[|w|-1]$, let $w^\diamond=w[0]\cdots w[|w|-2]$.

\begin{prop} 
\label{prop:block}
For any prime number $p$ and for any $w \in \pset^*$, the sub-sequences of the form $(a_{p;w}(pn+i))_{0 \leq i \leq p-1}$ are either constant (called type 1) or of the form
$$a_{p;w}(pn+i)=\begin{cases}
t^+ \;\; \text{if $i=w[|w|-1]$},\\
t\;\;\;\text{otherwise};
\end{cases}$$
for some integer $t \in \pset$ (called type 2).
Moreover, $(a_{p;w}(pn+i))_{0 \leq i \leq p-1}$ is of type 2 if and only if $w^\diamond$ is a suffix of $[n]_p$.\qed
\end{prop}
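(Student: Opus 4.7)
The plan is to exploit the elementary observation that for $0 \le i \le p-1$ the base-$p$ expansion of $pn+i$ is obtained simply by appending the single digit $i$ to $[n]_p$. Occurrences of $w$ in $[pn+i]_p$ then split naturally into those lying entirely inside $[n]_p$ and those whose last letter is the newly appended digit $i$.

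First I would write $[pn+i]_p = [n]_p\, i$ (concatenation) and observe that an occurrence of $w$ in $[pn+i]_p$ whose last letter is the appended digit exists if and only if that last letter equals $w[|w|-1]$, i.e.\ $i = w[|w|-1]$, and the $|w|-1$ letters preceding it in $[n]_p$ equal $w^\diamond$, i.e.\ $w^\diamond$ is a suffix of $[n]_p$. Since every other occurrence of $w$ in $[pn+i]_p$ is also an occurrence in $[n]_p$ and conversely, this yields the key identity
\[
e_{p;w}(pn+i) - e_{p;w}(n) = \begin{cases} 1 & \text{if } w^\diamond \text{ is a suffix of } [n]_p \text{ and } i = w[|w|-1],\\ 0 & \text{otherwise.}\end{cases}
\]

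Reducing modulo $p$ and setting $t = a_{p;w}(n)$, I would then read off the two cases. When $w^\diamond$ is not a suffix of $[n]_p$, $a_{p;w}(pn+i)$ equals $t$ for every $i \in \pset$, which is exactly the type~1 form. When $w^\diamond$ is a suffix of $[n]_p$, the value equals $t^+$ for $i = w[|w|-1]$ and $t$ for all other $i$, which is exactly the type~2 form. Since $p \ge 2$ forces $t \neq t^+$, the two behaviours are mutually exclusive, which gives the ``if and only if'' in the final sentence of the statement.

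There is no genuine obstacle here: the whole proof reduces to the displayed counting identity, which is purely a statement about how occurrences of a pattern behave under appending a single letter. The only point deserving a line of care is the degenerate case $|w| = 1$, in which $w^\diamond = \epsilon$ is a suffix of every word, so the sequence is of type~2 for every $n$; this is consistent with the classical behaviour of single-letter block-counting sequences such as Thue--Morse.
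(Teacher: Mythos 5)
Your proof is correct; the paper in fact states this proposition without any proof (the end-of-proof mark is attached to the statement itself), and your appended-digit decomposition $[pn+i]_p = [n]_p\, i$, splitting occurrences of $w$ into those inside $[n]_p$ and the at most one ending at the new digit, is exactly the intended one-line justification. The only point left implicit in both your write-up and the paper is the convention for very small $n$ (leading zeros, e.g.\ whether $[0]_p$ is $0$ or $\epsilon$, which matters for $0$-words at the start of the sequence), but this does not affect the substance of the claim.
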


For the sequence $(a_{p;w}(n))_{n \in \mathbf{N}}$, let us define a $p$-block to be a sub-sequence of the form $(a_{p;w}(pn+i))_{0 \leq i \leq p-1}$ for some integer $n$. From the previous proposition, a $p$-block is either of type 1 or type 2. For a $p$-block $(a_{p;w}(pn+i))_{0 \leq i \leq p-1}$ of type 2, let us define its index to be an integer $i \in \pset$ such that $a_{p;w}(pn+i) \neq a_{p;w}(pn+j)$ for all $j \neq i$.

\begin{prop} 
\label{prop:multiple}
For any prime number $p$ and any $w \in \pset^*$, if there exists a word $v$ such that $v^{p+1}$ is a prefix of $(a_{p;w}(n))_{n \in \mathbf{N}}$ and that $|v| \geq 2p^{|w|}$, then $|v|$ is a multiple of $p^{|w|-1}$.
\end{prop}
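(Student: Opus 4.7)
Write $\ell = |w|$ and $L = |v|$. The case $\ell = 1$ is trivial since then $p^{\ell-1} = 1$, so assume $\ell \geq 2$. The hypothesis gives $a_{p;w}(i) = a_{p;w}(i+L)$ for all $i \in [0, pL)$. The plan is to combine this periodicity with Proposition \ref{prop:block}: the sequence decomposes into $p$-blocks $B_n = (a_{p;w}(pn+j))_{0 \leq j < p}$, each of which is either constant (type 1) or equals a constant $t$ everywhere except at position $j = w[\ell-1]$, where it takes value $t^+$ (type 2); moreover, $B_n$ is of type 2 iff $w^\diamond$ is a suffix of $[n]_p$, which for $n \geq p^{\ell-1}$ is equivalent to $n \equiv (w^\diamond)_p \pmod{p^{\ell-1}}$. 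Thus the type sequence $(\sigma(n))_n$ has exact period $p^{\ell-1}$ on $n \geq p^{\ell-1}$.

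The proof will proceed in two steps. First I would show $p \mid L$. Suppose on the contrary $p \nmid L$. Pick a block-index $n \in [p^{\ell-1}, L/p]$ for which $B_n$ is of type 2; such $n$ exist in abundance because the density of type-2 blocks is $1/p^{\ell-1}$ and $L/p \geq 2p^{\ell-1}$. The ``bump'' of $B_n$ sits at sequence position $m = pn + w[\ell-1]$, where the value $t^+$ differs from the neighbouring values $t$ inside $B_n$. The periodicity replicates this local pattern at position $m + L$, but the shifted index $m+L$ straddles two distinct true $p$-blocks by the assumption $p \nmid L$, and Proposition \ref{prop:block} heavily restricts the values that can appear in those true blocks around $m+L$. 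Using several consecutive type-2 bumps, which are spaced $p^\ell$ apart and are all contained in the prefix $v^{p+1}$ since $L \geq 2p^\ell$, the required coincidences cannot all be met simultaneously, and one obtains a contradiction. Hence $p \mid L$.

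Writing $L = pM$ with $M \geq 2p^{\ell-1}$, the equalities between the copies of $v$ now align with block boundaries, so $B_n$ and $B_{n+M}$ have identical values (and hence the same type) for every $n \in [0, (p-1)M)$. Restrict to $n \in [p^{\ell-1}, M)$: both $n$ and $n+M$ then lie in $[p^{\ell-1}, \infty)$, where the type characterisation reduces to the periodic condition $n \equiv (w^\diamond)_p \pmod{p^{\ell-1}}$. The resulting identity of types on a range of length $M - p^{\ell-1} \geq p^{\ell-1}$ forces $M \equiv 0 \pmod{p^{\ell-1}}$, from which $p^{\ell-1} \mid L$ (indeed even $p^\ell \mid L$) follows. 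The main obstacle I expect is Step 1: the bump position of a type-2 block is not characterised by a single local pattern and so is not automatically preserved under a shift by a non-multiple of $p$, meaning the contradiction has to be extracted by comparing the local data around several consecutive bumps at once; in contrast, Step 2 amounts to a clean Fine--Wilf-style observation once block alignment is in hand.
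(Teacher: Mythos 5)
Your Step 1 is a genuine gap: you state that when $p \nmid L$ the replicated bump ``straddles two distinct true $p$-blocks'' and that ``the required coincidences cannot all be met simultaneously,'' but you never exhibit the contradiction, and you yourself flag this as the main obstacle. As written, the argument is incomplete precisely at the point where all the work lies. Moreover, the two-step structure (first force $p \mid L$, then align blocks) is not what the proposition needs and is likely harder than the statement itself: the conclusion is only that $p^{|w|-1} \mid L$, and nothing in the paper requires $p \mid L$ at this stage.

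The idea you are missing is the reason the hypothesis is $v^{p+1}$ rather than $v^2$: instead of shifting by one period $L$, shift by $p$ periods, i.e.\ by $pL$. The prefix $v^{p+1}$ gives $a_{p;w}(i)=a_{p;w}(i+pL)$ for all $i<L$, and the shift $pL$ is a multiple of $p$, so it carries the block $B_m=(a_{p;w}(pm+j))_{0\le j<p}$ exactly onto the block $B_{m+L}$ --- block boundaries are preserved automatically, with no case distinction on $L \bmod p$. Since $|v|\ge 2p^{|w|}$, $v$ contains a non-constant (hence type~2) block $B_m$ with the single value $1$ at position $w[|w|-1]$; then $B_{m+L}=B_m$ is also non-constant, hence type~2, so by Proposition~\ref{prop:block} the word $w^\diamond$ is a suffix of both $[m]_p$ and $[m+L]_p$, giving $L\equiv 0 \pmod{p^{|w|-1}}$ directly. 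Your Step 2 observation (types are periodic with period $p^{\ell-1}$ in the block index) is sound and is essentially the same mechanism the paper uses, but it only becomes available after the block alignment that the $pL$-shift provides for free.
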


\begin{proof}
If $|v| \geq 2p^{|w|}$, then, from Proposition \ref{prop:blockx} and \ref{prop:block0}, $v$ contains a $p$-block of the form 
$$a_{p;w}(pm+i)=\begin{cases}
1 \;\; \text{if $i=w[|w|-1]$},\\
0 \;\; \text{otherwise},
\end{cases}$$
for some $m$. Since $v^{p+1}$ is a prefix of $(a_{p;w}(n))_{n \in \mathbf{N}}$, $(a_{p;w}(pm+p|v|+i))_{0 \leq i \leq p-1}=(a_{p;w}(pm+i))_{0 \leq i \leq p-1}$, which is also a $p$-factor of type 2. From Proposition \ref{prop:block}, $w^\diamond$ is a suffix of both $[m]_p$ and $[m+|v|]_p$. Thus, $m+|v|-m$ is a multiple of $p^{|w|-1}$.
\qed
\end{proof}

\begin{prop} 
\label{prop:power}
For any prime integer $p$ and any $w \in \pset^*$, the sequence $(a_{p;w}(n))_{n \in \mathbf{N}}$ cannot have a prefix $v^{p+1}$ such that $|v|=ip^{|w|-1}$ for some positive integer $i \geq p+1$.
\end{prop}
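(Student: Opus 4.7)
The plan is to argue by contradiction. Assume $(a_{p;w}(n))_{n\in\mathbf{N}}$ has a prefix $v^{p+1}$ with $|v|=ip^{|w|-1}$ and $i \geq p+1$. My strategy is a self-similarity reduction that, applied once or iteratively, produces a shorter prefix repetition $V^{p+1}$ of the same sequence; this is then contradicted either by strong induction (using Proposition \ref{prop:multiple} to refit $V$ into the hypothesized form) or by direct inspection of the initial segment of $(a_{p;w}(n))$ afforded by Propositions \ref{prop:blockx} and \ref{prop:block0}.

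First I invoke Proposition \ref{prop:block}: the type-2 $p$-blocks occur at block-positions $n$ with $w^\diamond$ a suffix of $[n]_p$, hence along an arithmetic progression of common difference $p^{|w|-1}$. The periodicity $|v|/p = ip^{|w|-2}$ imposed by $v^{p+1}$ on this pattern must be compatible with its intrinsic $p^{|w|-1}$-periodicity, which forces $p \mid i$. Next I examine the decimated subsequence $b_k := a_{p;w}(p^{|w|-1}k + (w^\diamond)_p)$. Writing $[p^{|w|-1}k + (w^\diamond)_p]_p = [k]_p\, w^\diamond$ and splitting the count of occurrences of $w$ into three parts (entirely in $[k]_p$, crossing the boundary, entirely in $w^\diamond$), one obtains $b_k \equiv a_{p;w}(k) + \gamma(k) \pmod p$, where $\gamma(k)$ is a correction term that vanishes when $w$ has no nontrivial border and is a simple $p$-automatic perturbation otherwise. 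The periodicity of $v^{p+1}$ thus yields a period $i$ for $(b_k)_{0\leq k<(p+1)i}$, and after absorbing $\gamma$ it yields a prefix of $(a_{p;w}(n))$ of the form $V^{p+1}$ with $|V|=i$.

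Since $|V|=i<|v|$, strong induction on prefix length is available. Applying Proposition \ref{prop:multiple} to $V$, whenever $|V|\geq 2p^{|w|}$, forces $|V|=i'p^{|w|-1}$, so by the induction hypothesis $i'\leq p$ and hence $i\leq p^{|w|}$. Iterating the reduction, one descends through the hierarchical structure of the sequence until either the iteration terminates at a small size where the required $V^{p+1}$ would conflict with the explicit initial values of $(a_{p;w}(n))$ (in particular, the length-$(p+1)i$ prefix must faithfully encode the first nontrivial value $a_{p;w}((w)_p)=1$, pinning down the pattern), or the sizes collapse to $i\leq p$. Either way one contradicts $i\geq p+1$. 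The main obstacle I anticipate is handling the correction $\gamma(k)$ for $w$ with nontrivial borders, where $\gamma$ depends on the base-$p$ suffix of $k$ and the transfer of periodicity from $(b_k)$ to $(a_{p;w}(k))$ must be checked against this perturbation; the edge case $|w|=1$, where $p^{|w|-1}=1$, is handled by a separate direct argument on the digit-counting sequence.
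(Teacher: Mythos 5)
Your renormalization strategy is genuinely different from the paper's argument (which directly compares two copies of $v$ inside $v^{p+1}$: it looks at the boundary positions $kip^{|w|-1}$ for $1\leq k\leq p$, notes their base-$p$ expansions end in distinct nonzero digits followed by zeros, picks the one ending in $w[0]$, and uses Lemmas~\ref{lem:words-1} and~\ref{lem:words-2} to manufacture an offset $j<p^{|w|-1}$ at which two copies of $v$ disagree). But your sketch has two gaps that I do not see how to close. First, the step ``after absorbing $\gamma$ it yields a prefix $V^{p+1}$ with $|V|=i$'' does not follow. You have $b_k\equiv a_{p;w}(k)+\gamma(k)\pmod p$ where $\gamma(k)$ depends on the last $|w|-1$ base-$p$ digits of $k$, i.e.\ $\gamma$ is (eventually) periodic with period $p^{|w|-1}$ in $k$. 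Periodicity of $(b_k)$ with period $i$ transfers to $(a_{p;w}(k))$ only if $\gamma(k+i)=\gamma(k)$ on the relevant range, which requires $p^{|w|-1}\mid i$; your compatibility argument with the type-2 block pattern only yields $p\mid i$. So for $|w|\geq 3$ (and for $|w|=2$ with a bordered $w$) the shorter repetition $V^{p+1}$ is not actually produced, and the whole descent never starts.

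Second, even granting the reduction, the endgame is circular. The new length $i$ satisfies $i\geq p+1$ but need not be of the form $i'p^{|w|-1}$, so the inductive hypothesis (which is exactly the statement being proved) does not apply to $V$; Proposition~\ref{prop:multiple} only forces $p^{|w|-1}\mid |V|$ when $|V|\geq 2p^{|w|}$, leaving the entire range $p+1\leq i<2p^{|w|}$ uncovered. Your claim that in this range $V^{p+1}$ ``would conflict with the explicit initial values'' is precisely the content of the proposition in its hardest case, and no concrete contradiction is exhibited (nor is the promised separate argument for $|w|=1$). To repair this you would need either to establish $p^{|w|-1}\mid i$ and supply an explicit base case, or — more economically — to abandon the descent and argue as the paper does: locate two block boundaries $kip^{|w|-1}$ and $k'ip^{|w|-1}$ whose expansions have different longest suffixes that are prefixes of $w$, and extend by a word $u$ of length $|w|-1$ (Lemmas~\ref{lem:words-1} and~\ref{lem:words-2}) so that the counts of $w$ differ modulo $p$ at offset $j=[u]_p<p^{|w|-1}\leq|v|$.
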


This proposition will be proved with the help of the following lemmas.

\begin{lem} 
\label{lem:words-1}
Let $w\in \pset^*$, then for any words $a,b \in \pset^*$ and for any positive integer $\ell$, there exists a word $u$ such that $|u|=l$, $|au|_w=|a|_w$ and $|bu|_w=|b|_w$.
\end{lem}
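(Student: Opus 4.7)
The plan is to take $u$ to be a constant word $x^\ell$, where $x \in \pset$ is chosen to be any letter different from $w[|w|-1]$, the last letter of $w$. Such an $x$ exists because $p \geq 2$. Note that this choice depends only on $w$, not on $a$ or $b$, so the same single word $u$ will serve both extensions simultaneously, as the statement requires.

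I would then establish $|au|_w = |a|_w$ by ruling out any occurrence of $w$ in $au$ that uses at least one letter of the $u$ part. Consider an occurrence of $w$ starting at position $i$ of $au$. If its rightmost position $i + |w| - 1$ is less than $|a|$, the occurrence is entirely inside $a$ and is already counted by $|a|_w$. Otherwise $i + |w| - 1 \geq |a|$, and the rightmost letter of the occurrence is a letter of $u$, hence equals $x$; but that same position of the occurrence matches $w[|w|-1]$, forcing $x = w[|w|-1]$ and contradicting the choice of $x$. Thus no new occurrence appears, and the identical argument gives $|bu|_w = |b|_w$.

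I do not foresee any real obstacle here: once the correct padding letter is identified, the verification is immediate and involves no induction or auxiliary combinatorics. The only point to be mindful of is to isolate the \emph{last} letter of $w$ (rather than, say, trying to avoid every letter appearing in $w$), so that the argument still works when $w$ uses the full alphabet $\pset$. The restriction $p \geq 2$ is the sole arithmetic hypothesis used, and it is automatic since $p$ is prime.
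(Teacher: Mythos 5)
Your proposal is correct and is essentially identical to the paper's own proof: both take $u=x^{\ell}$ for a letter $x\neq w[|w|-1]$ and observe that any occurrence of $w$ meeting the appended block would have to end with $x$. Your write-up just spells out the position bookkeeping that the paper leaves implicit.
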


\begin{proof}
Let $x\in\pset\backslash\{w[|w|-1]\}$ and $u=x^{\ell}$. It is clear that $|au|_w=|a|_w$ and $|bu|_w=|b|_w$ because none of the added factor of size $|w|$ ends with $x$.
\end{proof}

\begin{lem} 
\label{lem:words-2}
Let $w$ be a word in $\pset^*$ such that $|w| >1$. Let $a,b \in \pset^*$ such that $a_w\neq b_w$ where $a_w$ and $b_w$ are the longest suffixes of respectively $a$ and $b$ that are prefixes of $w$. Then there exists a word $u$ such that $|u|\leq |w|-1$ and that  $|au|_w \not \equiv |bu|_w \mod p$.
\end{lem}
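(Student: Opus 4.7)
The plan is to combine a ``locality'' identity for the block-counting function with a two-witness argument. First I would establish that whenever $|u|\leq |w|-1$,
\[
|xu|_w = |x|_w + |x_w\, u|_w
\]
for every word $x$. The reason is that every occurrence of $w$ in $xu$ using at least one letter of $u$ must start at some position $|x|-k$ with $k\in\{1,\dots,|w|-1\}$, and this forces the length-$k$ suffix of $x$ to coincide with $w[0..k-1]$; by the maximality of $x_w$, the set of admissible $k$ is the same as for $x_w$, and the obvious position shift puts these boundary-crossing occurrences into bijection with the occurrences of $w$ inside $x_w u$.

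Once the identity is in hand, I would assume without loss of generality that $|a_w|<|b_w|$ (they are distinct prefixes of $w$, so of different lengths) and try the two test words
\[
u_0 = \epsilon, \qquad u_1 = w[|b_w|..|w|-1],
\]
both of length at most $|w|-1$ (the latter because $|b_w|\geq 1$). Plugging into the identity yields $|au_0|_w-|bu_0|_w=|a|_w-|b|_w$ and $|au_1|_w-|bu_1|_w=(|a|_w-|b|_w)-1$: the first because $|a_w|_w=|b_w|_w=0$, and the second because $b_w\, u_1=w$ contributes exactly one occurrence of $w$ while $a_w\, u_1$ has length $|a_w|+|w|-|b_w|<|w|$ and contributes none.

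These two integers differ by $1$, so their residues modulo the prime $p$ cannot both vanish; one of $u_0,u_1$ is thus a witness, and the lemma follows.

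The delicate step I anticipate is the locality identity: it demands careful position bookkeeping and the key observation that which $k$'s give a boundary-crossing match depends only on $x_w$ and not on the earlier letters of $x$. A minor subtlety is that the argument implicitly reads $x_w$ as a proper prefix of $w$; otherwise the case $|b_w|=|w|$ would collapse $u_1$ into $u_0$ and one would have to construct a different witness.
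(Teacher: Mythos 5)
Your proposal is essentially the paper's own argument: the paper likewise uses $\epsilon$ as one candidate witness and the word $v$ completing the longer of $a_w,b_w$ to a full copy of $w$ as the other, observing that the resulting counts $|av|_w-|bv|_w$ and $|a|_w-|b|_w$ differ by exactly $1$, so at least one is nonzero mod $p$ (your ``locality identity'' is just a cleaner packaging of the boundary-crossing bookkeeping the paper leaves implicit). The edge case you flag --- the longer suffix-prefix being all of $w$, which collapses the second witness to $\epsilon$ --- is present but silently unaddressed in the paper's proof as well, so your attempt matches the paper both in substance and in its one loose end.
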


\begin{proof}
If $|a|_w \not \equiv |b|_w \mod p$, then let $v= \epsilon$.\\
If $|a|_w \equiv |b|_w \mod p$, because have $a_w \neq b_w$, then $|a|_w\neq |b_w|$ because $w$ doesn't have multiple suffixes of the same length. Suppose that $a_w$ is the longest. It is clear that $|a_w|>0$. We define $v$ to be a word satisfying $a_wv=w$. In this case, $|v| \leq |w|-1$, $|av|_w=|a|_w+1$ and $|bv|_w=|b|_w$. 
\end{proof}

Now we are able to prove Proposition \ref{prop:power}.

\begin{proof} [of Proposition \ref{prop:power}]
We only need to prove that there exist $k, k'\in \pset$ such that $$(a_{p;w}(n))_{kip^{|w|-1}\leq n<(k+1)ip^{|w|-1}} \neq (a_{p;w}(n))_{k'ip^{|w|-1}\leq n<(k'+1)ip^{|w|-1}},$$
i.e. there exists some $j$ such that $0\leq j<|v|$ and $$a_{p;w}(kip^{|w|-1}+j) \neq a_{p;w}(k'ip^{|w|-1}+j).$$

For $1\leq k \leq p$, let $t_k=[kip^{|w|-1}]_p$. One has $t_k=u_kx_k0^j$ for some word $u_k$, some letter $x_k\in\pset\backslash\{0\}$ and some non-negative integer $j\geq |w|-1$.
Note that $u_1\neq 0$. Since $p$ is prime, one has $x_k\neq x_{k'}$ if $k\neq k'$. Thus, 
there exists $k\in \pset$ such that $x_k=w[0]$.

Now, let $k'\in \pset\backslash\{k\}$ and let $v_k$ and $v_k'$ be the longest suffixes of respectively $u_kx_k$ and $u_{k'}x_{k'}$ that are prefixes of $w$.
Because $x_k=w[0]$, $v_k\neq \epsilon$ and thus $v_k\neq v_{k'}$.
Therefore, by Lemma \ref{lem:words-2} and Lemma \ref{lem:words-1}, there exists $u$ such that $|v_ku|_w\not\equiv|v_{k'}u|_w \mod p$ and that $|u|=|w|-1$.
Let $j=[u]_p$, clearly $j<ip^{|w|-1}$ and one has $$a_{p;w}(kip^{|w|-1}+j) \neq a_{p;w}(k'ip^{|w|-1}+j),$$ which proves the result.
%
%
%
\end{proof}

Now we are able to prove the principle theorem in most cases:

\begin{thm}
\label{th1-weak}
For any prime number $p$ and any $w \in \pset^*$, the sequence $(a_{p;w}(n))_{n \in \mathbf{N}}$ is $p$-uniformly morphic for any $w$ and non-purely morphic when $|w|>1$ and $w\neq 10$.
\end{thm}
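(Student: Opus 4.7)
The plan is to prove the two assertions of Theorem~\ref{th1-weak} separately. For the assertion of $p$-uniform morphicity (valid for every $w$), I would appeal to Propositions~\ref{prop:blockx}, \ref{prop:block0} and \ref{prop:block}: these show that $(a_{p;w}(n))_{n\in\mathbf{N}}$ is assembled $p$-block by $p$-block, with the type and content of the block at index $n$ determined by the single condition ``$w^{\diamond}$ is a suffix of $[n]_{p}$''. This condition is recognised by a finite DFA reading the digits of $n$ in base $p$, so the sequence is $p$-automatic. Taking $B$ to be $\pset$ times the DFA state set, one reads off from Proposition~\ref{prop:block} a $p$-uniform morphism $\phi:B^{*}\to B^{*}$ and a coding $\psi:B\to\pset$ (the projection to the first coordinate) such that the image under $\psi$ of the fixed point of $\phi$ starting at the appropriate initial letter is exactly $(a_{p;w}(n))_{n\in\mathbf{N}}$.

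For the non-purely morphic claim, I argue by contradiction: assume $(a_{p;w}(n))_{n\in\mathbf{N}}$ is itself a fixed point of a $p$-uniform morphism $\phi$. The first step is to show that $a_{p;w}(0)=a_{p;w}(1)=\cdots=a_{p;w}(p)=0$. Indeed, for $n<p$ the expansion $[n]_{p}$ has length one, strictly less than $|w|\geq 2$, so contains no occurrence of $w$; and for $n=p$ we have $[p]_{p}=10$, whose only length-at-least-two factor is $10$ itself, so the hypothesis $w\neq 10$ gives $a_{p;w}(p)=0$. Iterating the fixed-point equation $\phi(a_{p;w}(n))=a_{p;w}(np)\cdots a_{p;w}((n+1)p-1)$ to $\phi^{k}(a_{p;w}(n))=a_{p;w}(np^{k})\cdots a_{p;w}((n+1)p^{k}-1)$, and concatenating for $n=0,1,\ldots,p$, shows that the prefix of length $(p+1)p^{k}$ of the sequence equals $\bigl(\phi^{k}(0)\bigr)^{p+1}=v^{p+1}$ with $v=\phi^{k}(0)$ of length $p^{k}$.

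It then remains to invoke Proposition~\ref{prop:power}. Taking $k=|w|+1$ yields $|v|=p^{|w|+1}=p^{2}\cdot p^{|w|-1}$, i.e.\ $|v|=ip^{|w|-1}$ with $i=p^{2}$. Since $p\geq 2$, $i=p^{2}\geq p+1$, so Proposition~\ref{prop:power} forbids this prefix, giving the required contradiction and showing that $(a_{p;w}(n))_{n\in\mathbf{N}}$ is not purely morphic. I expect the only genuinely delicate portion of the proof to be writing down $(B,\phi,\psi)$ in the first part completely explicitly; the contradiction in the second part is short once Propositions~\ref{prop:multiple} and~\ref{prop:power} are in place, and the exclusion $w\neq 10$ enters at exactly one point, namely to ensure that $a_{p;w}(p)=0$ when $|w|=2$.
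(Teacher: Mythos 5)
Your first part (the $p$-uniform morphicity) is correct and matches the paper in substance: the paper simply cites Proposition 3.1 of \cite{cateland} and Theorem 16.1.5 of \cite{allouche_shallit_2003} instead of building the product automaton by hand, but the construction you sketch (value mod $p$ paired with a suffix-matching state for $w^\diamond$, read off via Proposition~\ref{prop:block}) is the standard one and works.

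The second part has a genuine gap: you open the contradiction with ``assume $(a_{p;w}(n))_{n\in\mathbf{N}}$ is itself a fixed point of a $p$-\emph{uniform} morphism $\phi$''. Purely morphic, as defined in the paper, means being a fixed point of \emph{some} non-erasing morphism, with no uniformity assumption; ruling out $p$-uniform fixed points does not rule out pure morphicity, and indeed the whole point of the theorem is to separate the uniformly morphic presentation from a purely morphic one. Your argument uses uniformity essentially: the identity $|\phi^{k}(0)|=p^{k}$ is what lets you write $|v|=p^{2}\cdot p^{|w|-1}$ and jump straight to Proposition~\ref{prop:power}, which is why Proposition~\ref{prop:multiple} never actually appears in your chain of reasoning --- a sign the problem has been over-restricted. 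The repair is the paper's route: for an arbitrary non-erasing morphism prolongable on the first letter, the prefix $0^{p+1}$ forces $\phi^{k}(0)^{p+1}$ to be a prefix for every $k$, with $|\phi^{k}(0)|\to\infty$, so the sequence would admit prefixes $v^{p+1}$ with $|v|$ arbitrarily large; Proposition~\ref{prop:multiple} then shows that any such $v$ with $|v|\geq 2p^{|w|}$ satisfies $|v|=ip^{|w|-1}$ with $i\geq 2p\geq p+1$, and Proposition~\ref{prop:power} excludes exactly these, giving the contradiction. Your verification that $a_{p;w}(0)=\cdots=a_{p;w}(p)=0$, and that this is the one place where $|w|>1$ and $w\neq 10$ enter, is correct and agrees with the paper.
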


\begin{proof}
First, the fact that $(a_{p;w}(n))_{n \in \mathbf{N}}$ is $p$-automatic for any word $w$ follows from the Proposition 3.1 in \cite{cateland}, Page 7 and Theorem 16.1.5 in \cite{allouche_shallit_2003}.

Now, if $w\neq 10$ and $|w|>1$ and the sequence $(a_{p;w}(n))_{n \in \mathbf{N}}$ is purely morphic, then $0^{p+1}$ is a prefix of $(a_{p;w}(n))_{n \in \mathbf{N}}$.
Thus, $(a_{p;w}(n))_{n \in \mathbf{N}}$ will have infinitely many prefix of type $v^{p+1}$. However, from Proposition \ref{prop:multiple} and \ref{prop:power}, $(a_{p;w}(n))_{n \in \mathbf{N}}$ can only have finitely many prefix of the form
 $v^{p+1}$. We conclude.\qed
\end{proof}

Here we prove the $p$ particular cases.

\begin{prop}
\label{prop:k}
For any prime number $p$ and for any $w \in \pset \backslash\{0\}$, the sequence $(a_{p,w}(n))_{n \in \mathbf{N}}$ is purely morphic.
\end{prop}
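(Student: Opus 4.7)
The plan is to exhibit an explicit $p$-uniform morphism $\sigma: \pset \to \pset^p$ having $(a_{p;w}(n))_{n \in \mathbf{N}}$ as a fixed point when $w = x \in \pset \backslash \{0\}$ is a single letter.

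First, I observe the fundamental recurrence: since $[pn+i]_p = [n]_p i$ (i.e., appending the digit $i$), the number of occurrences of the single letter $x$ in $[pn+i]_p$ equals that in $[n]_p$ plus the indicator $[i = x]$. Reducing modulo $p$, this gives
\begin{equation*}
a_{p;x}(pn+i) = a_{p;x}(n) + [i = x] \bmod p,
\end{equation*}
for all $n \in \mathbf{N}$ and $0 \le i < p$.

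Next, I define the morphism $\sigma: \pset \to \pset^*$ by
\begin{equation*}
\sigma(c) = c^{x} \, c^{+} \, c^{p-x-1},
\end{equation*}
so that $\sigma(c)$ is a word of length $p$ consisting entirely of the letter $c$ except at position $x$, where it is $c^+$. The morphism is clearly non-erasing and $p$-uniform. Because $x \neq 0$, the word $\sigma(0) = 0^x \, 1 \, 0^{p-x-1}$ begins with $0$, so $\sigma$ admits an infinite fixed point $\sigma^{\omega}(0) = \lim_{k \to \infty} \sigma^{k}(0)$.

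Finally, I would verify by induction on $k$ that $\sigma^{k}(0) = (a_{p;x}(n))_{0 \le n < p^{k}}$. The base case $k=0$ is trivial since $a_{p;x}(0) = 0$. For the inductive step, writing $m = pn + i$ with $0 \le n < p^{k}$ and $0 \le i < p$, the inductive hypothesis and the definition of $\sigma$ give
\begin{equation*}
\sigma^{k+1}(0)[m] = \sigma\bigl(\sigma^{k}(0)[n]\bigr)[i] = \sigma\bigl(a_{p;x}(n)\bigr)[i],
\end{equation*}
which equals $a_{p;x}(n)$ if $i \neq x$ and $a_{p;x}(n)^+$ if $i = x$; by the recurrence above, this is exactly $a_{p;x}(m)$. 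Hence $(a_{p;x}(n))_{n \in \mathbf{N}}$ is the fixed point of $\sigma$ starting from $0$, and therefore purely morphic. There is no real obstacle here; the only point deserving attention is that the hypothesis $x \neq 0$ is used precisely to ensure $\sigma(0)$ starts with $0$, so that the iteration makes sense and produces the correct sequence.
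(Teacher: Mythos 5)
Your proof is correct and takes essentially the same route as the paper: the paper also uses the recurrence $a_{p;w}(pm+i)=a_{p;w}(m)^+$ if $i=w$ and $a_{p;w}(m)$ otherwise, and defines the identical morphism $i\mapsto v_i$ with $v_i[k]=i^+$ exactly at position $k=w$. You simply spell out the induction and the role of $w\neq 0$ (ensuring the image of $0$ starts with $0$) that the paper leaves as ``easy to check.''
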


\begin{proof}
It is easy to check that for any non-negative integer $m$, $(a_{p;w}(pm+i))_{0 \leq i \leq p-1}$ satisfies the following property:
$$a_{p;w}(pm+i)=\begin{cases}
a_{p;w}(m)^+ \;\; \text{if $i=w$},\\
a_{p;w}(m) \;\; \text{otherwise}.
\end{cases}$$
Thus, it is easy to check that $(a_{p;w}(pm+i))_{0 \leq i \leq p-1}$ is the fixed point of the morphism: $i \to v_i$ for all $i \in \pset$, where,
$$v_i[k]=\begin{cases}
i^+ \;\; \text{if $k=w$},\\
i \;\; \text{otherwise}.
\end{cases}$$
\qed
\end{proof}

\begin{prop}
\label{prop:0-2}
The sequence $(a_{2,0}(n))_{n \in \mathbf{N}}$ is non-purely morphic.
\end{prop}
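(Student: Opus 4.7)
The plan is to argue by contradiction. Suppose $s = (a_{2;0}(n))_{n \in \mathbf{N}}$ is purely morphic, so $s = \phi^{\infty}(1)$ for some non-erasing morphism $\phi \colon \{0,1\}^* \to \{0,1\}^*$ with $\phi(1) = 1\alpha$ and $\alpha \neq \epsilon$. Set $r = |\phi(1)|$ and $t = |\phi(0)|$. The starting point is Proposition~\ref{prop:block} applied with $p=2$ and $w=0$: since $w^\diamond = \epsilon$ is trivially a suffix of every binary expansion, \emph{every} $2$-block $(s_{2n}, s_{2n+1})$ of $s$ is of type 2, hence equals $(0,1)$ or $(1,0)$. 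Combined with $s_0s_1 = 10$ (computed directly) and prolongability at $1$, this forces $\phi(1)$ to begin with $10$ and in particular $r \geq 2$.

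Next I split on the parities of $r$ and $t$. In the aligned case (both even), each image $\phi(c)$ is a concatenation of $2$-blocks from $\{01, 10\}$, so setting $b_n := s_{2n+1}$, the equation $\phi(s) = s$ descends to $b = \psi(b)$ for a $((r+t)/2)$-uniform morphism $\psi$ obtained by reading off the second letter of each $2$-block within $\phi(0)$ and $\phi(1)$. One checks directly that $b_n = s_n$ for $n \geq 1$ while $b_0 = s_1 = 0$, so $b$ is precisely $s$ with the first letter flipped. Since $b_0 = b_1 = 0$, the fixed-point equation demands $\psi(0) = b[0..k-1]$ coincide with $b[k..2k-1]$ for $k = (r+t)/2$, i.e.\ $b$ admits a square prefix of length $2k$. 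A case check on small $k$ combined with the recursion $s_{2n} = 1 - s_n$, $s_{2n+1} = s_n$ (valid for $n\geq 1$) and Proposition~\ref{prop:power} applied to $s$ rules out every candidate $k$.

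In the misaligned case (at least one of $r$, $t$ odd), successive copies of $\phi(1)$ within $s = \phi(s)$ begin at positions of differing parity. Because $\phi(1)$ begins with $10$ and every length-$2$ factor of $s$ starting at an even position lies in $\{01, 10\}$, one identifies specific small positions of $s$ at which the fixed-point equation forces incompatible values, the desired contradiction.

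The main obstacle is the aligned case, since one must control the fixed-point equation for $\psi$ uniformly in $k$. The conceptual reason purity fails is the broken symmetry at position $0$: the sequence $s$ obeys $s_{2n} = 1 - s_n$ and $s_{2n+1} = s_n$ only for $n \geq 1$ (the initial value $s_0 = 1$ does not fit, as it would require $s_0 = 1 - s_0$), so $s$ carries an essentially position-dependent perturbation of its otherwise self-similar structure, which is incompatible with the translation-invariant action of a uniform morphism on the derived sequence $b$.
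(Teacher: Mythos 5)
There are genuine gaps here: what you have written is a plan rather than a proof, and the two places where the plan must be cashed out are exactly where it breaks. First, in the aligned case your reduction is not sound as stated. From $s=\phi(s)$ with $|\phi(0)|=t$ and $|\phi(1)|=r$ both even, reading off the second letters of the $2$-blocks gives $b=\psi(s)$, \emph{not} $b=\psi(b)$; since $s_0=1$ while $b_0=0$, the first block of $\psi(s)$ is $\psi(1)$, not $\psi(0)$, so the square prefix $\psi(0)\psi(0)$ of $b$ that you want to extract from $b_0=b_1=0$ does not follow. (Also $\psi$ has $|\psi(0)|=t/2$ and $|\psi(1)|=r/2$, so it is not $((r+t)/2)$-uniform unless $r=t$.) Second, the tool you invoke to finish that case, Proposition~\ref{prop:power}, cannot be used here: it concerns prefixes of the form $v^{p+1}$ (cubes when $p=2$), not squares, and its proof relies on Lemma~\ref{lem:words-2} (which assumes $|w|>1$) and on the step producing $k$ with $x_k=w[0]$ and $x_k\neq 0$ (which assumes $w[0]\neq 0$); for $w=0$ neither hypothesis holds, which is precisely why the paper treats $|w|=1$ separately. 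Finally, the misaligned case is dispatched with ``one identifies specific small positions\dots'', which is exactly the content that has to be supplied.

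For comparison, the paper's proof is a direct computation: since the sequence begins $1,0,1,0$, pure morphicity would force infinitely many square prefixes $v^2$, and one rules out every square prefix with $|v|\geq 5$ using the identities $a_{2,0}(4k)=a_{2,0}(4k+3)=a_{2,0}(2k+1)=a_{2,0}(k)$ and $a_{2,0}(4k+1)=a_{2,0}(4k+2)=a_{2,0}(2k)=a_{2,0}(k)^+$ (valid for $k\geq 1$) together with a case split on $|v|\bmod 4$. Your closing observation --- that the self-similarity $s_{2n}=1-s_n$, $s_{2n+1}=s_n$ fails at $n=0$ --- is the right intuition and is the same phenomenon the paper exploits, but to turn it into a proof you must either carry out your parity and case analyses in full or prove an honest ``no long square prefix'' statement for $w=0$ from scratch; none of the general propositions of Section~\ref{sec:morphic} can be cited for it.
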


\begin{proof}
The sequence $(a_{2,0}(n))_{n \in \mathbf{N}}$ begins with $1,0,1,0$. Thus, if this sequence is purely morphic, then this sequence has infinitely many prefixes of the form $v^2$. Here we prove that $(a_{2,0}(n))_{n \in \mathbf{N}}$ cannot have a prefix of the form $v^2$ with $|v| \geq 5$.

If $(a_{2,0}(n))_{n \in \mathbf{N}}$ has a prefix of the form $v^2$ with $|v| \geq 5$. Let us suppose that $|v|=4k+i$ for some non-negative integers $k,i$ such that $i=0,1,2,3$ and $k \geq 1$. First note that $a_{2,0}(r)=a_{2,0}(4k+i+r)$ for any $r$ satisfying $0 \leq r < 4k+i$.

Also note that, for $k\geq 0$, $a_{2,0}(k)=a_{2,0}(4k)=a_{2,0}(4k+3)=a_{2,0}(2k+1)=x$ and $a_{2,0}(4k+1)=a_{2,0}(4k+2)=a_{2,0}(2k)=x^+$ for some $x\in\{0,1\}$. This is because if $[k]_2=u$, then $[2k]_2=u0$, $[2k+1]_2=u1$,  $[4k]_2=u00$, $[4k+1]_2=u01$, $[4k+2]_2=u10$ and $[4k+3]_2=u11$.

Finally, note that $a_{2,0}(2^tk-1)=a_{2,0}(k-1)$, because, we know that $k\geq 1$ so if $[k-1]_2=u$ then $[2^tk-1]_2=u1^t$. Similarly, $a_{2,0}(2^tk-2^s-1)=a_{2,0}(2^tk-1)^+$ if $1<s<t$.

Now if $i=0$, then $|v|=4k$ and $a_{2,0}(1)=a_{2,0}(4k+1)=0$, $a_{2,0}(2)=a_{2,0}(4k+2)=1$ which contradicts to $a_{2,0}(4k+1) = a_{2,0}(4k+2)$.

If $i=1$, then $|v|=4k+1$ and $a_{2,0}(0)=a_{2,0}(4k+1)=1$, $a_{2,0}(1)=a_{2,0}(4k+2)=0$ which contradicts to $a_{2,0}(4k+2) = a_{2,0}(4k+2)$.

If $i=2$, then $|v|=4k+2$ and $a_{2,0}(k-1)=a_{2,0}(4k-1)$ but $a_{2,0}(k-1)=a_{2,0}(8k-1)=a_{2,0}(4k-3)=a_{2,0}(4k-1)^+$.


If $i=3$, then $|v|=4k+3$ and $a_{2,0}(2(2k+1))=a_{2,0}(4k+2)=a_{2,0}(4k+3)^+=a_{2,0}(0)^+=0$. Thus, we have $a_{2,0}(2k+1)=1$. But on the other hand, $a_{2,0}(2k+1)=a_{2,0}(4(2k+1))=a_{2,0}(8k+4)=a_{2,0}(4k+1)=a_{2,0}(4k+2)=0$, which is a contradiction.

%
 
In all cases, $(a_{2,0}(n))_{0 \leq n < |v|} \neq (a_{2,0}(n))_{|v| \leq n < 2|v|}$.
 \qed
\end{proof}

\begin{prop}
\label{prop:0-k}
For any prime number $p \geq 3$, the sequence $(a_{p,0}(n))_{n \in \mathbf{N}}$ is non-purely morphic.
\end{prop}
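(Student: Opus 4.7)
The plan is to mirror the strategy used for $p = 2$ in Proposition \ref{prop:0-2}: exhibit arbitrarily long $v^2$ prefixes under the pure-morphic hypothesis, and then rule out all $v^2$ prefixes with $|v|$ larger than some threshold.

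I would first verify that $(a_{p,0}(n))_{n \in \N}$ begins with $(10^{p-1})^p$: for $n = ap + b < p^2$, the expansion $[n]_p$ contains a single $0$ iff $b = 0$, so $a_{p,0}(n) = 1$ when $b = 0$ and $0$ otherwise. In particular $(10^{p-1})^2$ is a prefix. Assuming the sequence is the fixed point of a prolongable morphism $\phi$, the word $(\phi^k(10^{p-1}))^2$ is a prefix for every $k$, and the lengths $|\phi^k(10^{p-1})|$ grow without bound, so the sequence admits $v^2$ prefixes with $|v|$ arbitrarily large.

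The main step is then to show that no $v^2$ prefix exists with $|v| \geq p^2$. The workhorse is the recursion $a_{p,0}(pm + s) \equiv a_{p,0}(m) + \delta_{s,0} \pmod p$ (valid for $m \geq 1$), together with the values $a_{p,0}(jp) = 1$ for $0 \leq j \leq p - 1$ and $a_{p,0}(p^2) = 2$. Writing $L = |v| = pk + r$ with $r \in \pset$, the bound forces $k \geq p$. Testing the equation $a_{p,0}(n) = a_{p,0}(n + L)$ at a small set of positions eliminates each residue $r$: for $r \in \{1, \ldots, p - 2\}$, comparing $n = 0, 1$ forces $a_{p,0}(k)$ to equal both $1$ and $0$; for $r = p - 1$, comparing $n = 1, 2$ forces $a_{p,0}(k + 1)$ to equal both $p - 1$ and $0$; and for $r = 0$, the equations at $n = jp$, $0 \leq j \leq p - 1$, jointly impose $a_{p,0}(k + j) = 0$ throughout this range.

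The delicate case is $r = 0$: writing $k = pq + s$ with $q \geq 1$ and unfolding the recursion once more, a short case analysis on $s$ shows that every residue except $s = 1$ already yields an inconsistency between forced values of $a_{p,0}(q)$ and $a_{p,0}(q + 1)$. In the surviving subcase $s = 1$ one obtains $a_{p,0}(q) = 0$ and $a_{p,0}(q + 1) = p - 1$; then testing the prefix equation at $n = p^2$ (legitimate because $L > p^2$ here) one computes
\[
a_{p,0}(L + p^2) \equiv a_{p,0}(q + 1) + 1 \equiv 0 \pmod p,
\]
while the equation demands $a_{p,0}(p^2) = 2$. Since $p \geq 3$, $2 \not\equiv 0 \pmod p$ and we conclude. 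The main obstacle is the nested recursion of this last case, which genuinely uses $p \geq 3$ to separate the residues $0$ and $2$; combining this non-existence of long $v^2$ prefixes with the pure-morphic consequence contradicts the assumption, proving non-pure morphicity.
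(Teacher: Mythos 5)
Your proof is correct, and it follows the paper's overall strategy (pure morphicity forces infinitely many square prefixes, so it suffices to rule out prefixes $v^2$ with $|v|\geq p^2$), but the elimination of long square prefixes is carried out by a genuinely different computation. The paper first proves $p\mid |v|$ by appealing to the rigidity of type-2 blocks from Proposition~\ref{prop:block} (two type-2 $p$-blocks cannot have their special letter at different offsets), then splits on whether $p^2\mid |v|$, writing $|v|=kp^2+tp$ and testing the single position $x=(p-t)p$, chosen so that $|v|+x$ is a multiple of $p^2$. You instead work entirely from the digit recursion $a_{p,0}(pm+s)\equiv a_{p,0}(m)+\delta_{s,0}$ (for $m\geq 1$): the residue $r=|v|\bmod p$ is eliminated by testing $n\in\{0,1,2\}$, and the case $p\mid |v|$ is handled by the battery of positions $n=jp$, $0\leq j\leq p-1$, followed by a second digit decomposition $k=pq+s$ of $k=|v|/p$, with the last surviving subcase ($s=1$) killed at $n=p^2$ via $2\not\equiv 0\pmod p$. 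Your route is more self-contained (it never invokes Proposition~\ref{prop:block}) and more systematic, at the cost of a longer case analysis; the paper's single test position $(p-t)p$ is slicker but less transparent. Both arguments use $p\geq 3$ at the same kind of step, namely separating the residues $0$ and $2$ modulo $p$. The individual computations in your sketch check out, including the legitimacy of the test position $p^2$: in the surviving subcase $|v|=p(pq+1)=p^2q+p>p^2$, so $n=p^2$ is indeed an admissible position for the prefix equation.
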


\begin{proof}
The sequence $(a_{p,0}(n))_{n \in \mathbf{N}}$ begins with $(10^{p-1})^p$. Thus, if this sequence is purely morphic, then this sequence has infinitely many prefixes of the form $v^2$. Here we prove that $(a_{p,0}(n))_{n \in \mathbf{N}}$ cannot have a prefix of the form $v^2$ with $|v| \geq p^2$.

First, let us prove that if $v^2$
is a prefix of $(a_{p,0}(n))_{n \in \mathbf{N}}$, then $|v|$ is
a multiple of $p$. It is easy to check that $v^2$ is not a prefix of $(a_{p,0}(n))_{n \in \mathbf{N}}$ when $|v|=1,2$. Let us suppose that $|v| \geq 3$. In this case, $v$ begins with $1, 0, 0$. Thus, $a_{p,0}(|v|) = 1,
a_{p,0}(|v| + 1) = a_{p,0}(|v| + 2) = 0$.

Let us suppose that $|v|=kp+t$ for some nonnegative integers $k,t$ such that $0 \leq t \leq k-1$. We first prove that $t\neq k-1$. If it is the case, then $|v|+1$ is a multiple of $p$ and $a_{p,0}(|v| + 1) \neq a_{p,0}(|v| + 2) = 0$ since $|v|+2$ has one $0$ less than $|v|+1$ in their p-expansions. this contradicts the fact that $a_{p,0}(|v| + 1) = a_{p,0}(|v| + 2) = 0$.

Now let us suppose that $t \neq k-1$. In this case, $(a_{p,0}(n))_{kp\leq n \leq (k+1)p-1}$ contains the factor $a_{p,0}(|v|)a_{p,0}(|v|+1)=1,0$. Thus, $(a_{p,0}(n))_{kp\leq n \leq (k+1)p-1}$ is a factor of type 2 announced in proposition 8. But the word $(a_{p,0}(n))_{0\leq n \leq p-1}=10^{p-1}$ is also a word of type 2 and the word $(a_{p,0}(n))_{n \in \mathbf{N}}$ cannot have two different factors of type 2 such that the special letters are at different positions. Thus, $ a_{p,0}(|v|)a_{p,0}(|v|+1)$ should be a prefix of $(a_{p,0}(n))_{kp\leq n \leq (k+1)p-1}$ and consequently $|v|$ is a multiple of $p$. 


Second, $|v|$ is not a multiple of $p^2$. Because, if it is in this case, $a_{p,0}(|v|)=a_{p,0}(0)=1$ and $a_{p,0}(|v|+p)=a_{p,0}(|v|)-1=0$. But $a_{p,0}(p)=1$, thus, $a_{p,0}(|v|+p) \neq a_{p,0}(p)$. Consequently, $(a_{p,0}(n))_{0 \leq n <|v|-1} \neq (a_{p,0}(n))_{|v| \leq n <2|v|-1}$.

Third, if $|v|$ is not a multiple of $p^2$ but larger than $p^2+1$, let us suppose that $|v|=kp^2+tp$ for some positive integers $k,t$ such that $1 \leq t \leq p-1$. Let  $x=(p-t)p$, we then have $a_{p,0}(|v|+x)=a_{p,0}(x)=1$. But in this case, $a_{p,0}(x+p)=1$ or $2$, but $a_{p,0}(|v|+x+p)=0$. Thus, $(a_{p,0}(n))_{0 \leq n <|v|-1} \neq (a_{p,0}(n))_{|v| \leq n <2|v|-1}$.
 \qed
\end{proof}

\begin{prop}
\label{prop:10-2}
The sequence $(a_{2;10}(n))_{n \in \mathbf{N}}$ is non-purely morphic.
\end{prop}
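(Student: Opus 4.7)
My plan follows the scheme of Propositions \ref{prop:0-2} and \ref{prop:0-k}. Since $(a_{2;10}(n))_{n\in\mathbf{N}}$ begins with $0,0$, any non-erasing morphism $\phi$ whose iteration from $0$ produces the sequence must have $\phi(0)$ beginning with $00$, and hence $\phi^k(0)\phi^k(0)$ is a prefix for every $k\geq 1$ with $|\phi^k(0)|\to\infty$. The problem therefore reduces to showing that no $v^2$ with $|v|\geq 2$ is a prefix of $(a_{2;10}(n))_{n\in\mathbf{N}}$.

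The main tool will be the pair of recursions
$$a_{2;10}(2n+1)=a_{2;10}(n),\qquad a_{2;10}(2n)=a_{2;10}(n)+(n\Mod 2)\Mod 2,$$
reflecting the fact that appending a $0$ to $[n]_2$ creates a new $10$-factor exactly when $n$ is odd, while appending a $1$ never does.

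Assume now that $v^2$ is a prefix with $N:=|v|\geq 2$; then the prefix-equalities $a_{2;10}(i)=a_{2;10}(i+N)$ hold for $0\leq i<N$, and I will derive a contradiction by case analysis on the parity of $N$. In the even case $N=2N'$, applying the first recursion to the equalities with odd $i=2n'+1$ (for $0\leq n'<N'$) gives $a_{2;10}(n')=a_{2;10}(n'+N')$, so $v'^2$ is again a prefix with $|v'|=N/2$; the case $n'=0$ together with the second recursion then forces $N'$ to be even, so $4\mid N$. In the odd case $N\geq 3$, set $M=(N-1)/2$: applying the recursions to the equalities at $i=1$ and $i=2$ yields $a_{2;10}(M+1)=1$ and $a_{2;10}(M+1)+((M+1)\Mod 2)\equiv 0\Mod 2$, jointly forcing $M$ to be even. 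This already rules out $N=3$, and for $N\geq 5$ the analogous treatment at $i=3,4$ yields $a_{2;10}(M+2)=1$ and $M$ odd, contradicting the parity of $M$ just obtained.

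Iterating the even-case reduction, an admissible $N\geq 2$ must be a pure power of $2$ (otherwise some halving would produce an odd length $\geq 3$, excluded above); after finitely many halvings we arrive at length $2$, itself excluded by $4\nmid 2$. Hence no $|v|\geq 2$ is admissible, completing the proof. The main obstacle will be the odd case, where extracting the contradiction requires two rounds of the recursion (and thus the values $a_{2;10}(i)$ up to $i=4$) together with a comparison of the parities of $M+1$ and $M+2$; the boundary $N=3$ must be handled from the first round alone.
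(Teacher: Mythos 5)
Your proof is correct, but it takes a genuinely different route from the paper's. The paper argues by an exhaustive case analysis on the shape of the binary expansion of $|v|$ (six cases of the form $1^t$, $1^t01$, $u101^t01$, $u101^t$, $u10^s1^t$, $u10^t$), in each case performing an explicit binary addition to exhibit an offset $j$ with $a_{2;10}(j)\neq a_{2;10}(|v|+j)$. You instead exploit the $2$-kernel recursions $a_{2;10}(2n+1)=a_{2;10}(n)$ and $a_{2;10}(2n)\equiv a_{2;10}(n)+n \pmod 2$ and run a descent on $N=|v|$: an odd $N\geq 3$ is killed by the incompatible parities of $M=(N-1)/2$ forced at $i=1,2$ versus $i=3,4$ (with $N=3$ already dying at the first pair since $M=1$ is odd), while an even $N$ forces $4\mid N$ and passes the square-prefix property down to $N/2$, so no $N\geq 2$ survives. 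I checked the individual steps --- the recursions, the initial values $a_{2;10}(1),\dots,a_{2;10}(4)=0,1,0,1$, the parity deductions, and the range conditions $i<N$ --- and they all hold. Your approach buys a cleaner completeness argument (there is no need to verify that the six binary shapes exhaust all integers greater than $1$) and makes the automatic structure of the sequence do the work; the paper's version is more explicit about which single position witnesses each failure. Both proofs rest on the same slightly informal but standard reduction from pure morphicity to the existence of arbitrarily long square prefixes, which you state marginally more carefully via $|\phi^k(0)|\to\infty$.
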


\begin{proof}
The sequence $(a_{2;10}(n))_{n \in \mathbf{N}}$ begins with $0010$. Thus, if this sequence is purely morphic, then this sequence has infinitely many prefixes of the form $v^2$. We will prove that its only prefix of square shape is $00$.

Let $v^2$ be a prefix of $(a_{p;10}(n))_{n \in \mathbf{N}}$. Because of that, one can note that $(a_{p;10}(n))_{0\leq n <|v|} = a_{p;10}(|v|+n))_{0\leq n<|v| }$, in particular, $(a_{p;10}(|v|+n))_{0\leq n \leq 4}=0010$. Using this, we will prove this proposition by proving all the different possibility for the word $[|v|]_2$.

For now on, $u$ can be any word in $\pset^*$ and $s$ and $t$ positive integer. Note that the computation is made in binary basis.

i) If $[|v|]_2=1^t$ with $t>1$, we have $a_{p;10}(|v|+1)=1\neq 0$ because $1^t + 1 = 10^t$.

ii) If $[|v|]_2=1^t01$, one can simply note that $a_{p;10}(|v|)=1\neq0$.

iii) If $[|v|]_2=u101^t01$ then $a_{p;10}(|v|+3)=a_{p;10}(|v|)^+$ because $u101^t01+11 = u110^{s+2}$.

iv) If $[|v|]_2=u101^t$ with $t>1$ then $a_{p;10}(|v|+2)=a_{p;10}(|v|)$, because $u101^t + 11 = u110^{t-1}1$.

v) If $[|v|]_2=u10^s1^t$ with $s>1$ we have $a_{p;10}(|v|+1)=a_{p;10}(|v|)^+$, because $u10^s1^t + 1 = u10^{s-1}10^t$.

vi) Finally, if $[|v|]_2=u10^t$ with $t>1$, we have on one hand $a_{p;10}(|v|+(1^{t-1}0)_2)=a_{p;10}(|v|)=0$ because $u10^t+1^{t-1}0=u1^t0$. We also have $a_{p;10}(|v|+(1^{t-1}0)_2)=a_{p;10}((1^{t-1}0)_2)=1$, which is a contradiction.

An attentive reader will remark that this cover all the number strictly bigger than 1.
\qed
\end{proof}

\begin{prop}
\label{prop:10-p}
For any prime number $p\geq 3$  the sequence $(a_{p;10}(n))_{n \in \mathbf{N}}$ is non-purely morphic.
\end{prop}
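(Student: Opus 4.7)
The plan is to adapt the strategy of Proposition \ref{prop:10-2} to $p \geq 3$. Suppose for contradiction that $s = (a_{p;10}(n))_{n \in \mathbf{N}}$ is purely morphic, $s = \phi^\infty(0)$ for some non-erasing morphism $\phi$. Since $s$ begins with $0$, $\phi(0)$ begins with $0$ and $|\phi(0)| \geq 2$; since $s$ begins with $0^p$, applying $\phi$ termwise gives that $\phi(0)^p$ is a prefix of $s$. Hence $s$ has a prefix of the form $v^p$ with $|v| \geq 2$, and it suffices to show no such prefix exists.

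I would split on whether $p$ divides $|v|$. When $p \mid |v|$, write $|v| = Np^s$ with $\gcd(N,p) = 1$ and $s \geq 1$. For $n < p^s$, the base-$p$ expansion of $jNp^s + n$ is $[jN]_p$ followed by $[n]_p$ left-padded to length $s$ with zeros, so the $10$-count modulo $p$ splits into an internal contribution from $[jN]_p$, an internal contribution from $[n]_p$, and a boundary term which equals $1$ exactly when $jN \equiv 1 \pmod{p}$ and the first digit of the padded $[n]_p$ equals $0$. Imposing $a_{p;10}(jNp^s + n) = a_{p;10}(n)$ at $n = 0$ (where the boundary is active whenever $jN \equiv 1 \pmod{p}$) and at $n = p^{s-1}$ (whose expansion $10^{s-1}$ starts with $1$, so the boundary is suppressed) combines to force $jN \not\equiv 1 \pmod{p}$ for every $j \in \pset$. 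But since $\gcd(N,p) = 1$, the map $j \mapsto jN \pmod{p}$ is a bijection on $\pset$, so some $j^*$ satisfies $j^* N \equiv 1 \pmod{p}$, a contradiction.

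When $p \nmid |v|$, the range $2 \leq |v| \leq 2p-1$ is handled by direct inspection: for $2 \leq |v| \leq p-1$, $v = 0^{|v|}$ so $v^p = 0^{p|v|}$ conflicts with $a_{p;10}(p) = 1$; for $p+1 \leq |v| \leq 2p-1$, the pattern $v^p$ would require $a_{p;10}(p + |v|) = 1$, but $[p+|v|]_p$ is a two-digit word starting with the digit $2$ and hence contains no $10$-factor. For $|v| \geq 2p$, I would invoke Proposition \ref{prop:block}: the $p$-window at position $p$ is the type-$2$ block $(1,0,\ldots,0)$, so for each $k \in \{1,\ldots,p-1\}$ the $p$-window at position $p + k|v|$ must also equal this. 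Since $p \nmid |v|$, each such window straddles two aligned blocks, and the restriction that type-$2$ blocks have special offset $w[|w|-1] = 0$ shows the pattern $(1,0,\ldots,0)$ can fit only when $k|v| \equiv -1 \pmod{p}$; this singles out a unique $k$, leaving the remaining $p - 2 \geq 1$ values of $k$ to provide the desired contradiction.

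The hardest part is the bookkeeping in the first case: one must carefully identify the two right choices of $n$ (namely $n = 0$ and $n = p^{s-1}$) so that the boundary $10$ between $[jN]_p$ and the trailing $0^s$ of $[jNp^s]_p$ is first activated and then suppressed. Once this two-constraint mechanism is assembled, the bijectivity of multiplication by $N$ modulo $p$ closes the argument cleanly.
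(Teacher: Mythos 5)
Your proof is correct, and in both of its main branches it takes a genuinely different route from the paper's. For $p\mid |v|$, writing $|v|=Np^{s}$ with $p\nmid N$, the paper uses the bijectivity of multiplication by $N$ modulo $p$ to select the one $k$ with $[k|v|]_p$ ending in $10^{s}$, and then perturbs by $m=p^{s+1}-1$ so that the trailing zeros become $(p-1)$'s and the boundary occurrence of $10$ disappears, contradicting $a_{p;10}(m)=a_{p;10}(k|v|+m)$; your version instead probes each $j$ at the two positions $n=0$ and $n=p^{s-1}$ to isolate the boundary indicator of the event $jN\equiv 1 \pmod p$, shows it must vanish for every $j$, and contradicts the same bijectivity. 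The two arguments are essentially dual, but yours avoids the delicate base-$p$ carry computation of $[k|v|+m]_p$ (which, as printed in the paper, contains a slip). For $p\nmid |v|$ the divergence is larger: the paper only treats $|v|>p^{2}$ and cancels the last nonzero digit of $[|v|]_p$ by adding $c=p-x$, whereas you invoke the type-1/type-2 dichotomy of Proposition~\ref{prop:block} to show that a misaligned copy of the block $(1,0,\dots,0)$ forces $k|v|\equiv -1 \pmod p$, which can hold for at most one of the $p-1$ values of $k$; together with your direct inspection of $2\le |v|\le 2p-1$ this yields the stronger conclusion that \emph{no} prefix $v^{p}$ with $|v|\ge 2$ exists, rather than only finitely many. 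The one caveat, which you share with the paper, is the initial step: passing from ``purely morphic'' to ``infinitely many prefixes $v^{p}$'' via $|\phi(0)|\ge 2$ is the usual prolongability assumption rather than a formal consequence of the fixed-point definition, so it is not a defect relative to the paper's own standard.
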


\begin{proof}
The sequence $(a_{p;10}(n))_{n \in \mathbf{N}}$ begins with $0^p1$. Thus, if this sequence is purely morphic, then this sequence has infinitely many prefixes of the form $v^p$. It suffices to prove that if $|v|>p^2$ then $v^p$ is not a prefix of $(a_{p;10}(n))_{n \in \mathbf{N}}$.

Let $v^p$ be a prefix of $(a_{p;10}(n))_{n \in \mathbf{N}}$.

Suppose that $p\nmid |v|>p^2$. This means that $v=uyx$ for a word $u$ and some letters $y,x$ with $x\neq 0$. Because $v^2$ is a prefix of $(a_{p;10}(n))_{n \in \mathbf{N}}$, $v$ begins with the letters $0^p1$ and $(a_{p;10}(n))_{0\leq n \leq p} = a_{p;10}((v)_p+n))_{0\leq n \leq p}$. Thus $a_{p;10}((v)_p)=0$.

Let $c\in\pset$ such that $x+c=p$; it exists because $x\neq 0$ and $p>2$. Thus, $[(v)_p +c]_p=u'y'0$. Because $a_{p;w}(c)=0$, $a_{p;w}((v)_p +c)=0$ also and $a_{p;w}((v)_p +p)=0$ or $p-1$ which is not equal to $a_{p;w}(p)=1$. Therefore, $v^2$ is not a prefix of $(a_{p;10}(n))_{n \in \mathbf{N}}$.

Suppose now that $p\mid |v|\geq p$. Let $|v|=sp^t$ for some positive integer $s,t$ such that $t \geq 1$ and $p \nmid s$ and let $[v]_p=ux0^{t}$ for some word $u$ and some letter $x\in\pset\backslash\{0\}$.

Since $p$ is prime, there exists $k\in\pset$ such that $[kv]_p=u'10^{t}$ for some word $u'$. Let $m=p^{t+1}-1$, thus $[m]_p=(p-1)^t$ and $[kv+m]_p=u'1(p-1)^t$.

Since $(a_{p;10}(n))_{k_1|v| \leq n \leq (k_1+1)|v|-1}= (a_{p;10}(n))_{k_2|v| \leq n \leq (k_2+1)|v|-1}$, for any $k_1$, $k_2\in\pset$ we have $a_{p;10}(0)=0=a_{p;10}(kv)$ thus $a_{p;10}(u')=p-1$ which means that $a_{p;10}(m)=0\neq a_{p;10}(kv+m)=p-1$.

Hence, $v^p$ cannot be a prefix of $(a_{p;10}(n))_{n \in \mathbf{N}}$ if $v>p^2$ which concludes the proof.
\qed
\end{proof}

\begin{proof}[of Theorem~\ref{th1}]
It is a direct result of Theorem~\ref{th1-weak}, Proposition~\ref{prop:k}, Proposition~\ref{prop:0-2}, Proposition~\ref{prop:0-k}, Proposition~\ref{prop:10-2} and Proposition~\ref{prop:10-p}.\qed

\end{proof}

\section{Algebraicity}
\label{sec:alg}

By Christol's theorem \cite{Christol1980KMFR}, we know that the power series
$f=\sum_{i=0}^{\infty} a_{p;w}(n)t^n$ is algebraic over $\mathbb{F}_p(t)$. 
Now we prove that $f$ is algebraic of degree $p$. Indeed,
if we let $[w]_p$ denote $w_1p^{k-1}+\cdots +w_k$,
and write $a_n=a_{p;w}(n)$ for short, then
\begin{align*}
	&\quad (1+t+\cdots +t^{p-1}) f^p -f\\
	&=\sum_{n\geq 0} \sum_{j=0}^{p-1} (a_n - a_{pn+j})t^{pn+j}\\
	&=\sum_{n\geq 0} (a_n - a_{pn+w_k})t^{pn+w_k}\\
	&=\sum_{n\geq 0}\sum_{j=0}^{p-1} (a_{np+j} - a_{np^2+jp+w_k})t^{np^2+jp+w_k}\\
	&=\sum_{n\geq 0}(a_{np+w_{k-1}} - a_{np^2+w_{k-1}p+w_k})t^{np^2+w_{k-1}p+w_k}\\
	&\ldots\\
	&=\sum_{n\geq 0} (a_{np^{k-1}+ w_1 p^{k-2}  \cdots +w_{k-1}}-a_{np^{k}+w_1p^{k-1}+\cdots + w_k}) t^{np^{k}+[w]_p}\\
 &=\begin{cases}  
	 \sum_{n\geq 0} - t^{np^{k}+[w]_p} =t^{[w]_p} /(t^{p^{k}}-1), \; &\text{if } w_1\neq 0\\
	 \sum_{n\geq 1} - t^{np^{k}+[w]_p}=t^{p^{k}+[w]_p} /(t^{p^{k}}-1),\; &\text{if } w_1= 0.
  \end{cases}
\end{align*}

The irreduciblity of the the above functional equations is straightforward from the Eisenstein's criterion. We thus have the following propriety:

\begin{prop}
For any prime number $p$ and any finite word $w$ in $[\![p]\!]^*$, the power series $\sum_{i=0}^{\infty} a_{p;w}(n)t^n$ is algebraic of degree $p$ over $\mathbb{F}_p(t)$.
\end{prop}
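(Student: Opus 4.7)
The plan is to recast the functional equation derived in the display above as an Artin--Schreier equation, and then to use the classical valuation-theoretic criterion for irreducibility of such polynomials in characteristic $p$.

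First, I would exploit the characteristic-$p$ identities $1+t+\cdots+t^{p-1} = (t-1)^{p-1}$ and $t^{p^k}-1 = (t-1)^{p^k}$, where $k = |w|$. The functional equation becomes
\[
(t-1)^{p-1} f^p - f \;=\; \frac{A(t)}{(t-1)^{p^k}},
\]
with $A(t) = t^{[w]_p}$ if $w_1 \neq 0$ and $A(t) = t^{p^k+[w]_p}$ if $w_1 = 0$; in either case $A(1) = 1$, so $(t-1) \nmid A(t)$. Substituting $g := (t-1)f$ collapses the left-hand side into the Artin--Schreier form
\[
g^p - g \;=\; \frac{A(t)}{(t-1)^{p^k - 1}} \;=:\; c.
\]
Since $f = g/(t-1)$ and $t-1$ is a unit in $\mathbb{F}_p(t)$, $f$ and $g$ generate the same extension of $\mathbb{F}_p(t)$, so it suffices to prove that $X^p - X - c$ is irreducible over $\mathbb{F}_p(t)$.

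I would then invoke the standard Artin--Schreier dichotomy in characteristic $p$: the polynomial $X^p - X - c$ over a field $K$ is either irreducible or splits completely, and it is irreducible precisely when $c \notin \wp(K) := \{h^p - h : h \in K\}$. I check this via the $(t-1)$-adic valuation $v$ on $\mathbb{F}_p(t)$. For any $h \in \mathbb{F}_p(t)$: if $v(h) \geq 0$ then $v(h^p - h) \geq 0$; if $v(h) = -m < 0$ then $v(h^p) = -mp < -m$, forcing $v(h^p - h) = -mp$. Hence whenever $v(h^p - h) < 0$, it is a multiple of $p$. But $v(c) = -(p^k - 1) \equiv 1 \pmod{p}$, which is not a multiple of $p$. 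Therefore $c \notin \wp(\mathbb{F}_p(t))$, so $X^p - X - c$ is irreducible and $[\mathbb{F}_p(t)(g):\mathbb{F}_p(t)] = p$, giving $[\mathbb{F}_p(t)(f) : \mathbb{F}_p(t)] = p$.

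The only point that needs care — and is hardly an obstacle — is the valuation computation $v(c) = -(p^k - 1)$ (rather than something larger), for which one uses $(t-1) \nmid A(t)$, i.e.\ $A(1) = 1 \neq 0$. The rest is formal: the key number-theoretic input is simply that $p \nmid p^k - 1$, which is immediate.
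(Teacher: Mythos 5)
Your proof is correct, and for the step that actually carries the content of the proposition --- showing the degree is exactly $p$ rather than a proper divisor --- you take a genuinely different route from the paper. Both arguments start from the same functional equation $(1+t+\cdots+t^{p-1})f^p-f=A(t)/(t^{p^k}-1)$, but the paper then simply asserts that irreducibility is ``straightforward from Eisenstein's criterion,'' whereas you normalize to the Artin--Schreier form $g^p-g=c$ with $g=(t-1)f$ and $c=A(t)/(t-1)^{p^k-1}$, and conclude via the standard dichotomy together with the $(t-1)$-adic valuation computation $v(c)=-(p^k-1)\not\equiv 0 \pmod p$ (using $A(1)=1$ so that $(t-1)\nmid A(t)$, and $k=|w|\geq 1$). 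Your route buys real rigor here: after clearing denominators in the paper's equation, the prime $(t-1)$ divides both the leading coefficient $(t-1)^{p^k+p-1}$ and (in the reciprocal polynomial) the constant term to order at least $2$, so Eisenstein at $(t-1)$ does not apply off the shelf; what does work is exactly the Newton-polygon/Artin--Schreier argument you give, which amounts to the observation that $v(h^p-h)$ is either nonnegative or a multiple of $p$ for every $h\in\mathbb{F}_p(t)$. You also make explicit the small but necessary point that $\mathbb{F}_p(t)(f)=\mathbb{F}_p(t)(g)$ because $t-1$ is a unit, so the degree-$p$ conclusion transfers from $g$ back to $f$; the paper leaves this implicit. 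The only thing you take on faith is the displayed telescoping derivation of the functional equation itself, which is fine since it precedes the proposition in the text and is common to both arguments.
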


\section{Final remarks}
\label{sec:rem}

The authors remark that the fast algorithms introduced in Section \ref{sec:windows} for $0$-words and non-$0$-words are much different. However, the generating functions given in Section \ref{sec:alg} for $0$-words and non-$0$-words are quite similar. Thus, we believe that the algorithms in Section \ref{sec:windows} can be unified for both $0$-words and non-$0$-words.


\bibliographystyle{splncs03}
\bibliography{biblio}

\end{document}